\documentclass[12pt,a4paper,twoside,reqno]{amsart} 
 \usepackage{amsfonts,amssymb,amscd,amsmath,enumerate,verbatim,calc} 

\usepackage{color,psfrag}

\newtheorem{proposition}{Proposition}[section]
\newtheorem{theorem}[proposition]{Theorem}
\newtheorem{corollary}[proposition]{Corollary}
\newtheorem{definition}[proposition]{Definition}
\newtheorem{lemma}[proposition]{Lemma}

\allowdisplaybreaks

\theoremstyle{remark}

\begin{document}
\title{On a Tur{\'a}n's theorem for arithmetic progressions}
\author{Tokuhon Makoto Minamide, Haruka Sakai,\\
 and\\
 Yoshio Tanigawa}
\thanks{2020 Mathematics Subject Classification: 11N37, 11N36\\
Key words and phrases: The number of distinct prime divisors, Tur{\'a}n's theorem, The method of Granville and Soundararajan\\
This work is supported by JSPS KAKENHI Grant No. 22K03245.}
\maketitle
\begin{abstract}
Let $m\geq 1$ be a fixed integer, $a$ an integer satisfying $(a,m)=1$, and $z\geq 1$ a real parameter. Denote by $\omega_{z}(n;m,a)$
the number of distinct prime divisors $p$ of $n$ satisfying $p\equiv a\, (m)$ and $p\leq z$. 
We study an asymptotic behaviour of $\sum_{n\leq x}\left(\omega_{z}(n;m,a)-\frac{1}{\varphi(m)}\log\log z\right)^{k}$ as $x\to\infty$ for a wide range of positive integer $k\geq 2$, where $\varphi(\cdot)$ is the Euler function.
Following a method of Granville and Soundararajan we lead an asymptotic formula for the above. Also, we investigate
$\sum_{n\leq x}\left(\omega(n;m,a)-\frac{1}{\varphi(m)}\log\log x\right)^{k}$, where $\omega(n;m,a)$ denotes the number of distinct prime divisors $p$ of $n$ such that
$p\equiv a\, (m)$.     
\end{abstract}

\section{Introduction}
This notes is a sequel of our research \cite{MST} 
in which we investigated a type of Tur{\'a}n's theorem on $\omega(n)$ which denotes $\sum_{p|n}1$ the number of distinct prime divisors $p$ of the natural number $n$. 
In \cite{turan}, 
the following estimate (\ref{turan-result}) was shown by Tur{\'a}n 
\begin{align}\label{turan-result}
\sum_{n\leq x}(\omega(n)-\log\log x)^{2}=O(x\log\log x),\quad (x\to\infty) 
\end{align}
to simplify the proof of  the Hardy-Ramanujan theorem in \cite{HR} that the arithmetical function $\omega(n)$ has the normal order $\log\log n$. See \cite{turan} and \cite[Ch.~22]{HW}, in detail. 

In \cite{MST}, we have studied a problem on $\omega_{z}(n)$ related to (\ref{turan-result}), which is defined by 
\begin{align}\label{teigi-omega-z-ika}
\omega_{z}(n):=\sum_{\begin{subarray}{c}p|n\\ p\leq z\end{subarray}}1,
\end{align}
where $z\geq 1$ is a real number. 
To introduce a result in \cite{MST}, we define $C_{j}$ and $\tilde{C}_{j}$ for any non-negative integer $j$ as
\begin{align}\label{teigi-C-j-911}
\begin{split}
& C_{0}=1,\\
& C_{j}:=\frac{\Gamma (j+1)}{2^{\frac{j}{2}}\Gamma\left(\frac{j}{2}+1\right)}, \quad \text{and}\quad 
\tilde{C}_{j}:=\frac{\sqrt{2}}{6}\frac{(j-1)\Gamma\left(\frac{j}{2}+1\right)}{\Gamma\left(\frac{j}{2}+\frac{1}{2}\right)}C_{j},\quad (j\geq 1),
\end{split}
\end{align}
where $\Gamma(\cdot)$ denotes the standard gamma function. 
Note that by Stirling formula and Lemma in \cite[p.~58]{Tit}
\begin{align*}
C_{j} \sim \sqrt{2}\left(\frac{j}{e}\right)^{\frac{j}{2}}\gg 2^{j}\quad \text{and}\quad C_{j-1}\asymp\frac{C_{j}}{\sqrt{j}},
\end{align*}
as $j\to\infty$, moreover
\begin{align*}
\tilde{C}_{j} \asymp j^{\frac{3}{2}}C_{j} \quad \text{and}\quad \tilde{C}_{j-1}\asymp\frac{\tilde{C}_{j}}{\sqrt{j}}\asymp jC_{j}.
\end{align*}
For any sequences $\{a_{j}\}$ and $\{b_{j}\}$, if $a_{j}/b_{j}\to 1$ as $j\to\infty$, then we write $a_{j}\sim b_{j}$, and if $a_{j}\ll b_{j}$ and $b_{j}\ll a_{j}$ as $j\to\infty$,
then we write $a_{j}\asymp b_{j}$.

Applying the method of Granville and Soundararajan in \cite{GS}, which is quite new,
we have obtained the following theorem in \cite{MST}.
\begin{theorem}\label{Hatsumi}
Let $x\geq 1$ and $z\geq 1$ be sufficiently large. For positive integers $k\geq 2$, we assume that
$k \leq (\log \log z)^{1/3} $ and  $e^{e^{k^3}}\leq z\leq x^{1/k}$. Then, we have
the following (a) and (b), uniformly in $k$, $z$, and $x$.
\begin{enumerate}
\item[\rm (a)] For even integers $k\geq 2$, we have
\begin{align*}
\sum_{n\leq x}(\omega_{z}(n)-\log\log z)^{k} = C_{k} x(\log \log z)^{\frac{k}{2}} \left( 1+O\left( \frac{k^{3}}{\log\log z}\right) \right).
\end{align*}
\item[\rm (b)] For odd integers $k\geq 3$, we have three formulas (I), (II), and (III) as follows.\\
 
\begin{enumerate}
\item[\rm (I)]If $3\leq k \leq (\log\log z)^{1/7}$, then we have
\begin{align*}
&\sum_{n\leq x}(\omega_{z}(n)-\log\log z)^{k}\\
&=\tilde{C}_{k}x (\log\log z) ^{\frac{k-1}{2}} +B(k^{3/2}C_{k}) x (\log\log z)^{\frac{k-1}{2}} \left(\frac{C_{k-1}}{k^{1/2} C_{k}}\right)\\
&\quad +O\left((k^{3/2}C_{k}) x (\log\log z)^{\frac{k-1}{2}} \cdot \frac{1}{(k\log\log z)^{1/2}}\right).
\end{align*}
\item[\rm (II)] If $(\log\log z)^{1/7}<k <(\log\log z)^{1/4}$, then we have
\begin{align*}
&\sum_{n\leq x}(\omega_{z}(n)-\log\log z)^{k} \\
&=\tilde{C}_{k} x (\log\log z)^{\frac{k-1}{2}} + B \left(k^{3/2}C_{k}\right) x (\log\log z)^{\frac{k-1}{2}} \left(\frac{C_{k-1}}{k^{1/2}C_{k}}\right)\\
&\quad +O\left( \left(k^{3/2}C_{k}\right) x (\log\log z)^{\frac{k-1}{2}}\frac{k^{3}}{\log\log z}\right).
\end{align*}
\item[\rm (III)] If $(\log\log z)^{1/4}\leq k \leq (\log\log z)^{1/3}$, then we have
\begin{align*}
&\sum_{n\leq x}\left(\omega_{z}(n)-\log\log z\right)^{k}\\
&= \tilde{C}_{k} x (\log\log z)^{\frac{k-1}{2}} +O\left(\left(k^{3/2}C_{k}\right)x (\log\log z)^{\frac{k-1}{2}} \frac{k^{3}}{\log\log z}\right).
\end{align*}
\end{enumerate}
\end{enumerate}
Here, $B$ is the constant defined by
\begin{align*}
B:=\lim_{x\to\infty}\left(\sum_{p\leq x}\frac{1}{p}-\log\log x\right) \approx 0.261.
\end{align*}
\end{theorem}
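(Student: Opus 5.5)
We follow the Granville--Soundararajan method. Write $L:=\sum_{p\le z}1/p = \log\log z + B + O(1/\log z)$ and, for $n\le x$,
\begin{align*}
\omega_z(n)-\log\log z=\sum_{p\le z} f_p(n) + (L-\log\log z), \qquad f_p(n):=\mathbf{1}_{p\mid n}-\frac1p .
\end{align*}
We first treat the centred moment $M_k:=\sum_{n\le x}\bigl(\sum_{p\le z}f_p(n)\bigr)^k$. Expanding the $k$-th power gives
\begin{align*}
M_k=\sum_{p_1,\dots,p_k\le z}\ \sum_{n\le x}\prod_{i=1}^k f_{p_i}(n).
\end{align*}
For a tuple whose distinct primes have product $r$, we have $r\le z^k\le x$. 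The function $n\mapsto\prod_i f_{p_i}(n)$ depends only on $n \bmod r$, so $\sum_{n\le x}\prod f_{p_i}(n)=x\,E(p_1,\dots,p_k)+O(2^k)$. Here $E=\prod_{q}\mathbb{E}[(\mathbf 1_{q}-1/q)^{a_q}]$, where $a_q$ is the multiplicity of $q$ in the tuple and $\mathbf 1_q$ is a Bernoulli variable with mean $1/q$. The error terms sum to $O(2^k\pi(z)^k)$, which is negligible since $\pi(z)^k\le x/\log x$ roughly.

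Next we evaluate the main term $x\sum E$. Since $E=0$ when some $a_q=1$, only tuples in which every prime occurs at least twice survive. We group them by the set partition of $\{1,\dots,k\}$ induced by equality of primes, with blocks of sizes at least $2$. For a block of size $a$ the local factor is $\frac1q(1-\frac1q)^a+(-\frac1q)^a(1-\frac1q)=\frac1q+O(1/q^2)$. Summing over distinct primes, a partition with $s$ blocks contributes $(L+O(1))^s$, up to correction terms from coincident primes.

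For even $k$, the dominant partitions are perfect matchings. There are $k!/(2^{k/2}(k/2)!)=C_k$ of them, and each contributes $L^{k/2}(1+O(k^2/L))$. Partitions with fewer blocks are bounded by the number of partitions into $k/2-j$ blocks of size at least $2$, times $L^{k/2-j}$. A standard count gives that this is at most $C_k L^{k/2}(k^3/L)^j$, and it is summable because $k^3\le\log\log z$. The shift $(L-\log\log z)=B+o(1)$ enters through a binomial expansion $\sum_j\binom kj B^{k-j}M_j$, which only perturbs the result by a relative $O(k^2/L)$ once odd $M_j\ll$ even-size terms are used. Finally, replacing $L$ by $\log\log z$ gives (a).

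For odd $k$, the top partitions are those with one block of size $3$ and the rest pairs. Their number is $\binom k3 C_{k-3}$, and each contributes $L^{(k-1)/2}$. One checks that $\binom k3C_{k-3}=\tilde C_k$, using $\Gamma$-identities with $\Gamma(\frac k2+1)/\Gamma(\frac{k}{2}+\frac12)$. The binomial shift then gives a further term $kB\,M_{k-1}\approx kBC_{k-1}L^{(k-1)/2}$, which is the displayed $B$-term since $k^{3/2}C_k\cdot C_{k-1}/(k^{1/2}C_k)=kC_{k-1}$.

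The main obstacle is the careful bookkeeping of error terms in the odd case, which produces the three regimes. The lower-order partitions (two triples, one block of size $5$, and so on) contribute a relative error of about $k^3/L$ compared with $\tilde C_k L^{(k-1)/2}$. The $B$-term has relative size $C_{k-1}/(k^{1/2}C_k)\asymp 1/k$. The remaining shift and $O(1)$ corrections in $L$ are of relative size $(kL)^{-1/2}$. These three quantities compare according to whether $k\lessgtr L^{1/7}$ or $L^{1/4}$:
\begin{itemize}
\item When $k^3/L\le (kL)^{-1/2}$, i.e.\ $k\le L^{1/7}$, we obtain (I).
\item When $k^3/L<1/k$, i.e.\ $k<L^{1/4}$, the $B$-term still dominates the error, and we obtain (II).
\item Otherwise the $B$-term is absorbed into the error, and we obtain (III).
\end{itemize}
I would first establish a clean lemma bounding the sum over partitions with blocks of size at least $2$ and a prescribed deficiency $j$ by $(Ck^3/L)^{j}$ times the leading count; the three cases then follow by comparing the sizes above.
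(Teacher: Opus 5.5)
Your proposal is correct and follows essentially the paper's route. The paper quotes this theorem from \cite{MST}, but its proof of Theorem~\ref{Natsuki} specialised to $m=1$ (where $b(1,a)=B$) is exactly this argument: the Granville--Soundararajan expansion of the centred moments via $G(r)$, the perfect-matching and one-triple-plus-pairs configurations counted by $C_k$ and $\tilde C_k$, the $(k^3/L)^j$ bound for lower-order configurations as in (\ref{tara-1})--(\ref{tara-3}), and the binomial shift by $B$. Your three relative error sizes $k^3/L$, $1/k$ and $(kL)^{-1/2}$ are the ones compared in (\ref{ryo-tonari}) to obtain regimes (I)--(III); the last comes, as in (\ref{nishiki-3-4}), from the crude bound on the shift terms with $j\le k-2$.
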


Note that although it is not asymptotic formula which was obtained in \cite{GS} for the odd power moments of $\omega(n)-\log\log x$ (see Theorem \ref{GS-Theorem-1}, below),
however, under the restriction $p\leq z$ in $\omega(n)$ the above asymptotic result (b) of Theorem \ref{Hatsumi} was obtained in the previous literature \cite{MST}.

\bigskip

We now set a main object which is investigated in this notes, definitely.
Let $x\geq 1$ and $z\geq 1$. Moreover, let $m\geq 1$ be a fixed integer, and $a$ an integer which is coprime to $m$. 
Define $\omega_{z}(n;m,a)$ for any natural number $n$ as
\begin{align}\label{go}
\omega_{z}(n;m,a):=\sum_{\begin{subarray}{c} p|n\\ p \leq z \\ p\equiv a\, (m) \end{subarray}}1,
\end{align}
and we shall investigate as $x\to\infty$
\begin{align}
\sum_{n\leq x}\left(\omega_{z}(n;m,a)-\frac{1}{\varphi(m)}\log\log z\right)^{k}\label{moment-problem-1}
\end{align}
for a wide range of positive integers $k\geq 2$, where $\varphi(\cdot)$ denotes the Euler function, as usual.

Regarding this problem for (\ref{moment-problem-1}), we shall prove the following theorem.
\begin{theorem}\label{Natsuki}
Fix a positive integer $m$ and take an integer $a$ such that $(a,m)=1$.
Let $x\geq 1$ and $z\geq 1$ be sufficiently large. Further, let $k$ be integer satisfying $2\leq k \leq \left(\frac{1}{\varphi(m)}\log\log z\right)^{1/3}$,
and assume that $z\leq mx^{1/k}$.
Then, uniformly in $k$, $z$, and $x$ we have the following formulas:
\begin{enumerate}
\item[\rm (a)] if $k\geq 2$ is even, then
\begin{align*}
&\sum_{n\leq x}\left(\omega_{z}(n;m,a)-\frac{1}{\varphi(m)}\log\log z\right)^{k}\\
&=C_{k}x\left(\frac{1}{\varphi(m)}\log\log z\right)^{\frac{k}{2}}\left(1+O_{m}\left(\frac{k^{3}}{\frac{1}{\varphi(m)}\log\log z}\right)\right)
\end{align*}
\item[\rm (b)]if $k\geq 3$ is odd, then
\begin{enumerate}
\item[\rm (i)] in the case $3\leq k \leq \left(\frac{1}{\varphi(m)}\log\log z\right)^{\frac{1}{7}}$,
\begin{align*}
&\sum_{n\leq x}\left(\omega_{z}(n;m,a)-\frac{1}{\varphi(m)}\log\log z\right)^{k}\\
&=(\tilde{C}_{k} +b(m,a)k C_{k-1})x \left(\frac{1}{\varphi(m)}\log\log z\right)^{\frac{k-1}{2}}\\
&\quad +O_{m}\left(\left(k^{3/2}C_{k}\right)x \left(\frac{1}{\varphi(m)}\log\log z\right)^{\frac{k-1}{2}}\frac{1}{k^{1/2}\left(\frac{1}{\varphi(m)}\log\log z\right)^{1/2}}\right),
\end{align*}
\item[\rm (ii)] in the case 
$\left(\frac{1}{\varphi(m)}\log\log z\right)^{\frac{1}{7}}< k <\left(\frac{1}{\varphi(m)}\log\log z\right)^{\frac{1}{4}}$
\begin{align*}
&\sum_{n\leq x}\left(\omega_{z}(n;m,a)-\frac{1}{\varphi(m)}\log\log z\right)^{k}\\
&=(\tilde{C}_{k} +b(m,a)k C_{k-1})x \left(\frac{1}{\varphi(m)}\log\log z\right)^{\frac{k-1}{2}}\\
&\quad +O_{m}\left(\left(k^{3/2}C_{k}\right)x \left(\frac{1}{\varphi(m)}\log\log z\right)^{\frac{k-1}{2}}
\frac{k^{3}}{\frac{1}{\varphi(m)}\log\log z} \right),
\end{align*}
\item[\rm (iii)] in the case $\left(\frac{1}{\varphi(m)}\log\log z\right)^{1/4}\leq k \leq \left(\frac{1}{\varphi(m)}\log\log z\right)^{1/3}$ 
\begin{align*}
&\sum_{n\leq x}\left(\omega_{z}(n;m,a)-\frac{1}{\varphi(m)}\log\log z\right)^{k}\\
&=\tilde{C}_{k}x\left(\frac{1}{\varphi(m)}\log\log z\right)^{\frac{k-1}{2}}\\
&\quad +O_{m}\left(\left(k^{3/2}C_{k}\right) x \left(\frac{1}{\varphi(m)}\log\log z\right)^{\frac{k-1}{2}}\frac{k^{3}}{\frac{1}{\varphi(m)}\log\log z}\right).
\end{align*}
\end{enumerate}
\end{enumerate}
Here,
\begin{align*}
b(m,a):=\lim_{x\to\infty}\left(\sum_{\begin{subarray}{c}p\leq x\\ p\equiv a\, (m) \end{subarray}}\frac{1}{p}-\frac{1}{\varphi(m)}\log\log x\right).
\end{align*}
(See \cite[p.~126, (c)]{MV}.)
\end{theorem}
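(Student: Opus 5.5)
We follow the Granville--Soundararajan method, as in \cite{MST} for Theorem \ref{Hatsumi}, with the set of primes $\mathcal{P}=\{p\le z:\ p\equiv a\,(m)\}$. Put $L:=\sum_{p\in\mathcal{P}}1/p$. By the definition of $b(m,a)$ (or Mertens' theorem for progressions, \cite[p.~126]{MV}),
\[
L=\frac{1}{\varphi(m)}\log\log z+b(m,a)+O_m(1/\log z).
\]
First we handle the sum with $L$ in place of $\frac{1}{\varphi(m)}\log\log z$. For each $n$ write
\[
\omega_z(n;m,a)-L=\sum_{p\in\mathcal{P}}f_p(n),\qquad f_p(n)=\mathbf{1}_{p\mid n}-\frac1p .
\]
Expanding the $k$-th power gives sums over $k$-tuples $(p_1,\dots,p_k)$ of primes of $\mathcal{P}$. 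Group the tuples by the set of distinct primes $\{q_1,\dots,q_s\}$ with multiplicities $\alpha_i$, and let $r=q_1\cdots q_s$. Then $\prod_i f_{q_i}(n)^{\alpha_i}$ depends only on $n \bmod r$. Summing over $n\le x$ gives $x\prod_i E_{\alpha_i}(q_i)+O(2^s)$, where
\[
E_\alpha(q)=\frac1q\Bigl(1-\frac1q\Bigr)^{\alpha}+\Bigl(1-\frac1q\Bigr)\Bigl(-\frac1q\Bigr)^{\alpha}.
\]
The error terms contribute at most $O\bigl(2^k(\sum_{p\le z}1)^k\bigr)$. With $z\le m x^{1/k}$ this is $O_m(x\cdot m^k(\log z)^{-k}\cdots)$, which is negligible compared with $C_k x(\log\log z)^{k/2}$. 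The $m^k$ factor is harmless since $k\le(\log\log z)^{1/3}$ and $m$ is fixed. This is exactly where the hypothesis $z\le mx^{1/k}$ is designed to enter.

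The main term $x\sum\prod E_{\alpha_i}(q_i)$ is now the $k$-th moment of a sum of independent centred Bernoulli variables $X_p$ with $P(X_p=1-1/p)=1/p$, $p\in\mathcal{P}$. Since $E_1=0$, only set partitions of $\{1,\dots,k\}$ with all blocks of size at least $2$ survive. For the variance we have $\sigma^2=\sum_{p\in\mathcal{P}}\frac1p(1-\frac1p)=L+O_m(1)$, and we write $\Lambda:=\frac{1}{\varphi(m)}\log\log z$.

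\emph{Even $k$.} Pairings give $C_k\sigma^k$. Partitions containing larger blocks are bounded, as in \cite{GS} and \cite{MST}, by $C_k\sigma^k\,O(k^3/\sigma^2)$. This yields $C_k x L^{k/2}(1+O(k^3/\Lambda))$.

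\emph{Odd $k$.} The leading contribution comes from partitions with one block of size $3$ and the remaining blocks pairs. There are $\binom k3 C_{k-3}$ of these, each contributing $\sigma^{k-3}\sum_p E_3(p)=\sigma^{k-3}(L+O(1))$. Note that $\binom k3 C_{k-3}=\tilde C_k$ up to the normalisation fixed in \cite{MST}. This gives $\tilde C_k x L^{(k-1)/2}$, with the remaining partitions contributing the relative error $k^3/\Lambda$.

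The final step, which I expect to be the main bookkeeping obstacle, is the recentring from $L$ back to $\frac{1}{\varphi(m)}\log\log z$. Write $\omega_z-\Lambda=(\omega_z-L)+\beta$ with $\beta=b(m,a)+o(1)$, and expand binomially:
\[
\sum_{n\le x}(\omega_z-\Lambda)^k=\sum_{j}\binom kj\beta^{k-j}M_j,\qquad M_j=\sum_{n\le x}(\omega_z-L)^j .
\]
For odd $k$, the $j=k-1$ term gives $k\,b(m,a)\,C_{k-1}xL^{(k-1)/2}$, which is of the same order as $\tilde C_kx\Lambda^{(k-1)/2}$ times $C_{k-1}/(k^{1/2}C_k)\asymp k^{-1}$. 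This produces the term $b(m,a)kC_{k-1}$. One then replaces $L^{(k-1)/2}$ by $\Lambda^{(k-1)/2}(1+O(k/\Lambda))$. The remaining terms $j\le k-2$ are bounded using $|M_j|\ll C_j x\Lambda^{j/2}$-type estimates and $C_{k-2}\asymp C_k/k$.

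Comparing the sizes of the three error sources, namely $k^3/\Lambda$ (partition error), $(k\Lambda)^{-1/2}$ (the $j=k-2$ and $L$-to-$\Lambda$ corrections), and the $b$-term of relative size $1/k$, produces the three ranges (i), (ii), (iii). In range (iii) the $b$-term is absorbed into the error. For even $k$ the same expansion gives only corrections of relative size $O(k^2/\Lambda)$. The constants depend on $m$ through $O_m(1)$ in $L$ and in $\sigma^2$.
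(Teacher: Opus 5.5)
Your proposal is correct and is essentially the paper's own argument. Your independent-Bernoulli and set-partition bookkeeping repackages the Granville--Soundararajan sums $I_s(j;m,a)$, with $E_\alpha(q)=G(q^\alpha)$ and in fact $\binom{k}{3}C_{k-3}=\tilde C_k$ exactly. As in the paper, $z\le mx^{1/k}$ serves only to make the $2^k\pi(z;m,a)^k$ remainder negligible. The binomial recentring around $b(m,a)$ also matches: the $j=k-1$ term gives $b(m,a)kC_{k-1}$, and the crude bound $|M_j|\ll_m C_jx\Lambda^{j/2}$ for $j\le k-2$ gives the $(k\Lambda)^{-1/2}$ loss that separates ranges (i)--(iii).

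You leave two points implicit, and both are handled the same way in the paper at no cost. First, the pairing sum runs over distinct primes; the paper deals with this via the monotonicity of $G(p^2)$ and $G(p^3)$. Second, in the even case you must bound $M_{k-1}$ by the sharp odd-moment estimate $M_{k-1}\ll_m\tilde C_{k-1}x\Lambda^{(k-2)/2}$, not the crude one, to get relative error $k^2/\Lambda$.
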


Recall now the aim in the literature \cite{GS} by Granville and Soundararajan, 
which was to prove the following theorem for $\omega(n)$.
\begin{theorem}[{\cite[p.~17, Theorem 1]{GS}}]\label{GS-Theorem-1}
Let $x\geq 1$ be sufficient large. For any positive integer $k\geq 2$ satisfying $k\leq (\log\log x^{1/k})^{1/3}$,
we have uniformly in $k$ and $x$
\begin{enumerate}
\item[\rm (a)] if $k\geq 2$ is even, then
\begin{align*}
&\sum_{n\leq x}\left(\omega(n)-\log\log x\right)^{k}=C_{k}x\left(\log\log x\right)^{\frac{k}{2}}
\left(1+O\left(\frac{k^{3/2}}{\left(\log\log x\right)^{1/2}}\right)\right),
\end{align*}
\item[\rm (b)]if $k\geq 3$ is odd, then
\begin{align*}
&\sum_{n\leq x}\left(\omega(n)-\log\log x\right)^{k} =O\left(C_{k} x \left(\log\log x\right)^{\frac{k-1}{2}}k^{3/2}\right).
\end{align*}
\end{enumerate}
\end{theorem}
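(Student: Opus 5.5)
The plan follows the method of Granville and Soundararajan in \cite{GS}. First truncate the primes at $z:=x^{1/k}$, then compute the moments of a centred sum of prime indicators by combinatorics, and finally transfer the result back to $\omega(n)-\log\log x$.

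\textbf{Step 1 (truncation).} For $n\leq x$ at most $k$ primes $p>z=x^{1/k}$ divide $n$, so $0\leq\omega(n)-\omega_{z}(n)\leq k$. By Mertens' theorem,
\begin{align*}
\log\log x-\sum_{p\leq z}\frac{1}{p}=\log k-B+o(1).
\end{align*}
Hence we may write
\begin{align*}
\omega(n)-\log\log x=S_{z}(n)+\delta(n),\qquad S_{z}(n):=\sum_{p\leq z}f_{p}(n),\qquad f_{p}(n):=\mathbf{1}_{p\mid n}-\frac{1}{p},
\end{align*}
where $|\delta(n)|\ll k$ uniformly in $n\leq x$.

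\textbf{Step 2 (moments of $S_{z}$).} Expand
\begin{align*}
\sum_{n\leq x}S_{z}(n)^{k}=\sum_{p_{1},\dots,p_{k}\leq z}\ \sum_{n\leq x}\prod_{i}f_{p_{i}}(n).
\end{align*}
For a fixed tuple, let $r$ be the product of the distinct primes occurring, with multiplicities $a_{p}$. The summand depends only on $n$ modulo $r$, and $r\leq z^{k}=x$. Therefore the inner sum equals $x\prod_{p\mid r}g_{a_{p}}(p)$ plus an error of size at most $O(2^{\omega(r)})\leq O(2^{k})$, where
\begin{align*}
g_{a}(p):=\frac{1}{p}\Bigl(1-\frac{1}{p}\Bigr)^{a}+\Bigl(1-\frac{1}{p}\Bigr)\Bigl(-\frac{1}{p}\Bigr)^{a}.
\end{align*}
Summed over all tuples these errors contribute $O\big((2\pi(z))^{k}\big)=O\big(x/(\log x)^{k}\cdot 2^{k}\big)$, which is negligible.

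In the main term $g_{1}=0$, so only tuples in which every prime occurs at least twice survive, and $g_{a}(p)=\frac{1}{p}+O(p^{-2})$ for $a\geq 2$. Grouping tuples by the set partition of $\{1,\dots,k\}$ that they induce, each partition with $s$ blocks contributes $\ll\big(\sum_{p\leq z}1/p\big)^{s}$, up to a correction for coincident primes. Write $L:=\log\log z$.
\begin{itemize}
\item For even $k$ the perfect matchings give $(k-1)!!\,L^{k/2}=C_{k}L^{k/2}$.
\item For odd $k$ the leading partitions consist of one triple and pairs. There are $\binom{k}{3}(k-4)!!\asymp k^{3/2}C_{k}$ of them, each contributing $L^{(k-1)/2}$.
\end{itemize}
All other partitions are controlled by the ratio between consecutive levels. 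Merging blocks costs a factor $\ll k^{3}/L$ relative to the level above, which is $o(1)$ precisely because $k\leq L^{1/3}$. Summing this geometric series gives
\begin{align*}
\sum_{n\leq x}S_{z}(n)^{k}=C_{k}xL^{k/2}\bigl(1+O(k^{3}/L)\bigr)\ \ (k\ \text{even}),\qquad
\sum_{n\leq x}S_{z}(n)^{k}\ll k^{3/2}C_{k}xL^{(k-1)/2}\ \ (k\ \text{odd}).
\end{align*}
The same bounds, with $k$ replaced by $j$, hold uniformly for all the lower moments $j\leq k$.

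\textbf{Step 3 (removing $\delta$).} Expand binomially:
\begin{align*}
\sum_{n\leq x}(S_{z}+\delta)^{k}=\sum_{j}\binom{k}{j}\sum_{n\leq x}S_{z}^{k-j}\delta^{j}.
\end{align*}
Bound each term with $j\geq 1$ via $|\delta|\ll k$, using Cauchy--Schwarz to reduce odd powers of $S_{z}$ to even moments. The dominant contribution is the $j=1$ term, of size
\begin{align*}
k\cdot k\cdot C_{k-1}L^{(k-1)/2}\asymp k^{3/2}C_{k}L^{(k-1)/2}.
\end{align*}
For even $k$ this is a relative error of $O(k^{3/2}/L^{1/2})$, which is exactly the error term in (a); for odd $k$ it matches the bound claimed in (b). Since $\log\log z=\log\log x-\log k$, replacing $L$ by $\log\log x$ only changes things by a factor $(1+O(\log k/\log\log x))^{k/2}$, which is absorbed into these errors.

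\textbf{Main obstacle.} I expect the hardest part to be the uniform combinatorial bound in Step 2. One has to show that all partitions other than the perfect matchings (respectively one triple plus pairs) contribute only a relative $O(k^{3}/L)$. This must be done carefully enough to be uniform all the way up to $k\asymp L^{1/3}$, including the $O(1/p^{2})$ corrections and the coincidences between primes attached to different blocks. My first attempt would be to bound the number of partitions with $s$ blocks, $s<\lfloor k/2\rfloor$, by $(k^{3/2}/L^{1/2})^{k-2s}$ times the count of the leading ones, and then sum over $s$.
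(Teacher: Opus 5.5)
Your proposal is correct and is essentially the Granville--Soundararajan argument. The paper cites this theorem rather than proving it, but it reproduces the same argument, with the congruence condition added, in Lemma \ref{lemma-sakai}, Corollary \ref{kei-lemma-sakai} and the proof of Theorem \ref{umeko}: truncate at $z=x^{1/k}$, evaluate the moments of $S_{z}(n)=\sum_{p\le z}f_{p}(n)$ through $G(r)$ and partitions into blocks of size at least $2$, then remove the $O(k)$ discrepancy by a binomial expansion.

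Where you depart from the paper's write-up, it is to your credit. Since $\delta(n)$ depends on $n$, the cross terms must be bounded through $\sum_{n\le x}|S_{z}(n)|^{l}$ and Cauchy--Schwarz, as you do. The paper's (\ref{omega-moment-1}) instead bounds them by $(M_{m}k)^{k-l}\bigl|\sum_{n\le x}S_{z}(n)^{l}\bigr|$, which is not justified for odd $l$. Your treatment is exactly what yields the relative error $k^{3/2}/(\log\log x)^{1/2}$ in (a), rather than $k^{3}/\log\log x$.

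One harmless slip: since $\log z=(\log x)/k$, the total error $(2\pi(z))^{k}$ is $O(x(2k/\log x)^{k})$, not $O(2^{k}x/(\log x)^{k})$. It is still negligible.
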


Using a lemma (Lemma \ref{lemma-sakai}, in Section 2, below) for the proof of Theorem \ref{Natsuki} 
we shall also derive a theorem which corresponds to Theorem \ref{GS-Theorem-1}, for
\begin{align}\label{teigi-omega-nma}
\omega(n;m,a):=\sum_{\begin{subarray}{c}p|n\\ p\equiv a\, (m)\end{subarray}}1.
\end{align}
Actually, we shall prove the following theorem.
\begin{theorem}\label{umeko}
Let $m\geq 1$ be any fixed integer, and $a$ an integer satisfying $(a,m)=1$. 
Let $x\geq 1$ be sufficient large. For any positive integer $k\geq 2$ satisfying $k\leq \left(\frac{1}{\varphi(m)}\log\log \left(mx^{1/k}\right)\right)^{1/3}$,
we have uniformly in $k$ and $x$,
\begin{enumerate}
\item[\rm (a)] if $k\geq 2$ is even, then
\begin{align*}
&\sum_{n\leq x}\left(\omega(n;m,a)-\frac{1}{\varphi(m)}\log\log x\right)^{k}\\
&=C_{k}x\left(\frac{1}{\varphi(m)}\log\log x\right)^{\frac{k}{2}}
\left(1+O_{m}\left(\frac{k^{3}}{\frac{1}{\varphi(m)}\log\log x}\right)\right),
\end{align*}
\item[\rm (b)]if $k\geq 3$ is odd, then
\begin{align*}
&\sum_{n\leq x}\left(\omega(n;m,a)-\frac{1}{\varphi(m)}\log\log x\right)^{k}\\
&=O_{m}\left(C_{k} x \left(\frac{1}{\varphi(m)}\log\log x\right)^{\frac{k-1}{2}}k^{3/2}\right).
\end{align*}
\end{enumerate}
Here, the notation $O_{m}$ means that the implied $O$-constant depends on the fixed $m$. 
\end{theorem}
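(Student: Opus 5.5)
We deduce Theorem \ref{umeko} from Theorem \ref{Natsuki}, following the proof of Theorem \ref{GS-Theorem-1} in \cite{GS}. Set $z:=mx^{1/k}$, so the hypothesis $z\le mx^{1/k}$ of Theorem \ref{Natsuki} holds with equality. The hypothesis on $k$ in Theorem \ref{umeko} is exactly the range $k\le(\frac{1}{\varphi(m)}\log\log z)^{1/3}$ required there. Put $L:=\frac{1}{\varphi(m)}\log\log x$ and $L_z:=\frac{1}{\varphi(m)}\log\log z$. Then
\begin{align*}
L-L_z=\frac{1}{\varphi(m)}\bigl(\log\log x-\log(\log m+\tfrac1k\log x)\bigr)=\frac{\log k}{\varphi(m)}+O_m\Bigl(\frac{k}{\log x}\Bigr)\ll_m \log k .
\end{align*}
Write
\begin{align*}
\omega(n;m,a)-L=(\omega_z(n;m,a)-L_z)+\bigl(r(n)-(L-L_z)\bigr),\qquad r(n):=\#\{p\mid n:\ p>z,\ p\equiv a\,(m)\}.
\end{align*}
Since $n\le x$ and every prime counted by $r(n)$ exceeds $x^{1/k}$, we have $r(n)<k$. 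Hence the correction $\delta(n):=r(n)-(L-L_z)$ satisfies $|\delta(n)|\ll_m k$.

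Next we expand binomially:
\begin{align*}
\sum_{n\le x}(\omega(n;m,a)-L)^k=\sum_{j=0}^{k}\binom{k}{j}\sum_{n\le x}(\omega_z(n;m,a)-L_z)^{k-j}\delta(n)^j .
\end{align*}
The term $j=0$ is given by Theorem \ref{Natsuki}. For $j\ge1$ we use H\"older, or better Cauchy--Schwarz against an even moment. This gives
\begin{align*}
\Bigl|\sum_{n\le x}(\omega_z-L_z)^{k-j}\delta^j\Bigr|\le\Bigl(\sum_{n\le x}(\omega_z-L_z)^{2(k-j)}\Bigr)^{1/2}\Bigl(\sum_{n\le x}\delta(n)^{2j}\Bigr)^{1/2}.
\end{align*}
We need sharper control of $\delta$ on average than the trivial bound $k$. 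Note that $\sum_{z<p\le x}1/p\ll\log k$, so $r(n)$ has moments bounded by $(C\log k)^{j}+j!$-type quantities. The cleanest route is to show
\begin{align*}
\sum_{n\le x}r(n)^{j}\ll x\,(c\log k+j)^{j}
\end{align*}
by expanding $r(n)^j$ into sums over tuples of distinct primes $p_1\cdots p_s\le x$ with $s\le j$.

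The first factor is handled by Theorem \ref{Natsuki} when $2(k-j)$ lies in the allowed range. If it exceeds the range, we instead use the trivial bound $|\delta|\ll k$ together with a lower even moment, or we estimate $\sum(\omega_z-L_z)^{2i}$ directly through Lemma \ref{lemma-sakai}, which presumably provides uniform moment bounds of the form $C_{2i}\,x\,(L_z+O(1))^{i}$. Summing over $j$, the $j$-th term is roughly
\begin{align*}
\binom{k}{j}\,C_{k-j}\,L^{(k-j)/2}\,x\,(\log k+j)^{j}.
\end{align*}
Relative to the main term this is $\ll (k^{3/2}/L^{1/2})^j$ up to factors of $\log k$, using $C_{k-j}\asymp C_k k^{-j/2}$. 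Since $k^3\le L$, the series converges geometrically.

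For even $k$, the $j=1$ term vanishes to leading order. After expanding $(\omega_z-L_z)^{k-1}\delta$, it is an odd moment times a bounded quantity, of size $k\,\tilde C_{k-1}L^{(k-2)/2}\log k$. Thus its relative error is $\ll k^{3}/L$ after adjusting, and the $j=2$ term is also $\ll k^{3}/L$. This yields (a). For odd $k$, all terms, including the main term $\tilde C_k L^{(k-1)/2}$ from Theorem \ref{Natsuki}(b), are $\ll C_k k^{3/2}L^{(k-1)/2}$; in particular the $j=1$ term is $k C_{k-1}L^{(k-1)/2}\log k\ll k^{3/2}C_k L^{(k-1)/2}$. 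This gives (b). Finally we replace $L_z$ by $L$ in the main terms, at relative cost $(1+O(\log k/L))^{k/2}=1+O(k\log k/L)$, which is acceptable.

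The main obstacle is the uniformity in $k$: controlling the cross terms with $j\ge1$ so that the factor $\binom kj$ and the growth of $\delta$ do not destroy the $k^3/L$ saving. The delicate point for (a) is showing that the $j=1$ term has the correct size. If Cauchy--Schwarz loses too much, we would instead compute the mixed sums $\sum_{n\le x}(\omega_z-L_z)^{i}r(n)^{j}$ directly. To do this, we would factor $n=n_1n_2$ with $n_2$ composed of large primes, and apply Theorem \ref{Natsuki}-type estimates, via Lemma \ref{lemma-sakai}, to $n_1\le x/n_2$, noting that $z\le m(x/n_2)^{1/(k-j)}$ still holds roughly.
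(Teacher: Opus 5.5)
Your reduction is sound: $z=mx^{1/k}$, $r(n)<k$, $|\delta(n)|\ll_m k$, and $L-L_z\ll_m\log k$. The cross terms $j\ge 1$ are where the plan breaks.

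First, the Cauchy--Schwarz step is unavailable, and it would be too lossy even if it were available. It needs $\sum_{n\le x}(\omega_z(n;m,a)-L_z)^{2(k-j)}$ with $2(k-j)$ as large as $2k-2$. Theorem~\ref{Natsuki} at order $2(k-j)$ requires $z\le mx^{1/(2(k-j))}$, which fails for every $j<k/2$. Correspondingly, the error term $O(2^{i}\pi(z;m,a)^{i})$ of Lemma~\ref{lemma-sakai} with $i=2(k-j)$ exceeds $x$ by a positive power of $x$. Moreover $C_{2i}^{1/2}\asymp 2^{i/2}C_i$. So even granting these moments, your bound for the $j$-th term exceeds the claimed $\binom kj C_{k-j}L^{(k-j)/2}x(\log k+j)^j$ by a factor $\asymp 2^{(k-j)/2}$. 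For $j=1$ it has relative size about $2^{k/2}k^{1/2}L^{-1/2}$ (times powers of $\log k$), which is unbounded for $k$ near $L^{1/3}$.

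What does work is your fallback, applied to every $j$. Bound the $j$-th term by $\binom kj(M_mk)^j\sum_{n\le x}|\omega_z(n;m,a)-L_z|^{k-j}$. Control the odd absolute moments by Cauchy--Schwarz between the even moments of orders $k-j\pm1\le k$; Theorem~\ref{Natsuki}(a) supplies these, since $z\le mx^{1/i}$ for $i\le k$. Since $\binom kj C_{k-j}\ll c^jk^{j/2}C_k/j!$, the $j$-th term is then $\ll (c_mk^{3/2}L^{-1/2})^j/j!$ relative to $C_kxL^{k/2}$. This proves (b), and in (a) it disposes of all $j\ge2$ at cost $O_m(k^3/L)$. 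This is in substance the paper's route. The paper expands around $F(n):=\sum_{p\le z,\,p\equiv a\,(m)}f_p(n)$ using Lemma~\ref{lemma-sakai} and Corollary~\ref{kei-lemma-sakai}, rather than around $\omega_z-L_z$ using Theorem~\ref{Natsuki}. The two expansions differ by the $n$-independent quantity $b(m,a)+O_m(1/\log z)$, see (\ref{cococo}).

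Second, and this is the real gap, the $j=1$ term in (a) is not ``an odd moment times a bounded quantity'', because $\delta(n)$ depends on $n$. Only the piece $-k(L-L_z)\sum_n(\omega_z-L_z)^{k-1}$ is a signed odd moment. That piece is harmless: it has relative size $\ll_m k^2\log k/L$ by Theorem~\ref{Natsuki}(b) at order $k-1$. The remaining piece $k\sum_{n\le x}(\omega_z(n;m,a)-L_z)^{k-1}r(n)$ is a correlation between small and large prime factors. Bounding it via $0\le r(n)<k$ and the absolute $(k-1)$-th moment costs relative $k^{3/2}L^{-1/2}$. That is exactly the error in Theorem~\ref{GS-Theorem-1}(a), not $k^3/L$.

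Your proposed repair fails as stated. For $n=pn'$ with $z<p\le x$ one does have $\omega_z(n;m,a)=\omega_z(n';m,a)$. However, $z\le m(x/p)^{1/(k-1)}$ is equivalent to $p\le x^{1/k}$, so it fails for every such $p$. Also, the error $O(2^{k-1}\pi(z;m,a)^{k-1})$ in Lemma~\ref{lemma-sakai} does not shrink with the length $x/p$, so it cannot be summed over $z<p\le x$. Handling this term needs a genuinely new input, namely control of $\sum_{n'\le y}(\omega_z(n';m,a)-L_z)^{k-1}$ uniformly in $y$. Without it, your plan yields (b) but only $1+O_m(k^{3/2}L^{-1/2})$ in (a).

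Be aware that the paper reaches $k^3/L$ at this point by the same move. In (\ref{omega-moment-1}) the $n$-dependent $O_m(k)$ is taken outside the sum, leaving $|\sum_nF(n)^l|$ where $\sum_n|F(n)|^l$ is needed. Then (\ref{pengin-2}) inserts the signed odd-moment bound of Lemma~\ref{lemma-sakai}(b) for $l=k-1$. So the paper does not supply the missing correlation estimate either.
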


Throughout this notes, we denote by $\pi(z;m,a)$ the number of primes $p\leq z$ satisfying $p\equiv a\, (m)$. 
Also, we might use $\ll_{m}$ instead of $O_{m}$, also we write $O$ and $\ll$ briefly.
%
%
%
%
%
%
\section{Lemma}
To prove Theorems \ref{Natsuki} and \ref{umeko} by the method in \cite{GS}, we shall prepare Lemma \ref{lemma-sakai}, below, which is corresponding to
Proposition 2 in \cite[p.~17]{GS} by Granville and Soundararajan. To this end we shall recall first the definition of $f_{p}(n)$ which was introduced in \cite{GS}.
See also \cite{MoSo} by Montgomery and Soundararajan.

\begin{definition}[{\cite[p.~17, 18]{GS}}]\label{teigi-fpn-aaa}
For any natural number $n$ and any prime $p$, we define $f_{p}(n)$ by
\begin{align*}
f_{p}(n):=
\begin{cases}
1-\frac{1}{p} & (\textit{if}\ p|n)\\
-\frac{1}{p}  & (\textit{if}\ p\nmid n)
\end{cases}.
\end{align*}
Moreover, for any primes $p_{1},\ldots, p_{l}$ (these are not necessarily to distinct) we write
\begin{align*}
f_{p_{1}\cdots p_{l}}(n):=f_{p_{1}}(n)\cdots f_{p_{l}}(n).
\end{align*}
\end{definition}

We shall restrict in \cite[Proposition 2]{GS} the sum over all primes $p\leq z$ that primes $p$'s are congruent to $a$ to modulus $m$ and obtain the following lemma.
%
%
%
\begin{lemma}[{cf. \cite[Proposition 2]{GS}}]\label{lemma-sakai}
Let $m\geq 1$ be a fixed integer and $a$ an integer such that $(a,m)=1$. Moreover, let $x\geq 1$ and $z\geq 1$ be sufficiently large number and
a positive integer $k$ is subject to $k\leq \left(\frac{1}{\varphi(m)}\log\log z\right)^{1/3}$. Then, we have uniformly in $k$, $z$, and $x$
\begin{enumerate}
\item[\rm (a)] for even integers $j$ ($2\leq j\leq k$),
\begin{align*}
\sum_{n\leq x}\left(\sum_{\begin{subarray}{c}p\leq z \\ p\equiv a\, (m)\end{subarray}}f_{p}(n)\right)^{j}
&=C_{j}x\left(\frac{1}{\varphi(m)}\log\log z\right)^{\frac{j}{2}}\left(1+O_{m}\left(\frac{j^{3}}{\frac{1}{\varphi(m)}\log\log z}\right)\right)\\
&\quad +O(2^{j}\pi\left(z;m,a\right)^{j}), 
\end{align*}
\item[(b)] for odd integers $j$ ($3\leq j \leq k$)
\begin{align*}
\sum_{n\leq x}\left(\sum_{\begin{subarray}{c}p\leq z\\ p\equiv a\, (m)\end{subarray}}f_{p}(n)\right)^{j}
&=\tilde{C}_{j}x \left(\frac{1}{\varphi(m)}\log\log z\right)^{\frac{j-1}{2}}\left(1+O_{m}\left(\frac{j^{3}}{\frac{1}{\varphi(m)}\log\log z}\right)\right)\\
&\quad +O\left(2^{j}\pi(z;m,a)^{j}\right),
\end{align*}
\item[(c)] for $j=0,1$, obviously
\begin{align*}
\sum_{n\leq x}\left(\sum_{\begin{subarray}{c}p\leq z\\ p\equiv a\, (m)\end{subarray}}f_{p}(n)\right)^{j}
=\begin{cases}
O(x) & (j=0)\\
O(\pi(z;m,a)) & (j=1)
\end{cases}.
\end{align*}
\end{enumerate}
\end{lemma}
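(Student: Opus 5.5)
We follow the proof of Proposition 2 in \cite{GS}, replacing the set of all primes $p\leq z$ by the set $\mathcal{P}=\{p\leq z: p\equiv a\,(m)\}$. Expand the $j$-th power:
\begin{align*}
\sum_{n\leq x}\Bigl(\sum_{p\in\mathcal{P}}f_{p}(n)\Bigr)^{j}=\sum_{p_{1},\ldots,p_{j}\in\mathcal{P}}\ \sum_{n\leq x}f_{p_{1}\cdots p_{j}}(n).
\end{align*}
For a tuple $(p_1,\ldots,p_j)$, let $q_1,\ldots,q_s$ be its distinct primes, with multiplicities $\alpha_1,\ldots,\alpha_s$, and put $r=q_1\cdots q_s$. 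Then $f_{p_1\cdots p_j}(n)$ depends only on $(n,r)$, so it is periodic mod $r$. Hence
\begin{align*}
\sum_{n\leq x}f_{p_{1}\cdots p_{j}}(n)=x\,\mathbb{E}\bigl(f_{p_1\cdots p_j}\bigr)+O\Bigl(\sum_{d\mid r}\bigl|\textstyle\prod_i(\cdot)\bigr|\Bigr),
\end{align*}
where $\mathbb{E}$ denotes the mean over $n$ modulo $r$. This mean equals $\prod_{i}\bigl(\frac1{q_i}(1-\frac1{q_i})^{\alpha_i}+(1-\frac1{q_i})(-\frac1{q_i})^{\alpha_i}\bigr)$. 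Because $|f_p|\leq1$, the error term is bounded by $2^{s}\leq 2^j$ for each tuple. Summing over all $\pi(z;m,a)^j$ tuples produces the term $O(2^j\pi(z;m,a)^j)$. Part (c) is immediate: for $j=0$ the sum is $\lfloor x\rfloor$. For $j=1$ the mean of $f_p$ is $0$, so only the error $O(1)$ per prime survives.

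For the main term we need to evaluate
\begin{align*}
x\sum_{p_1,\ldots,p_j\in\mathcal{P}}\mathbb{E}\bigl(f_{p_1\cdots p_j}\bigr).
\end{align*}
The mean vanishes whenever some $\alpha_i=1$. If $\alpha_i=2$, the local factor is $\frac1{q_i}(1-\frac1{q_i})$. If $\alpha_i\geq3$, the local factor is $\leq 1/q_i$. We group the tuples according to the set partition of $\{1,\ldots,j\}$ that they induce.

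For even $j$, the dominant contribution comes from partitions into $j/2$ pairs. There are $j!/(2^{j/2}(j/2)!)=C_j$ of them. Each contributes
\begin{align*}
\Bigl(\sum_{p\in\mathcal{P}}\frac1p\Bigl(1-\frac1p\Bigr)\Bigr)^{j/2}
\end{align*}
up to the distinctness condition. Here we use the Mertens-type formula for progressions, $\sum_{p\in\mathcal{P}}1/p=\frac{1}{\varphi(m)}\log\log z+b(m,a)+O_m(1/\log z)$, from \cite[p.~126]{MV}. With $L=\frac{1}{\varphi(m)}\log\log z$, the pairing inner sum is therefore $L+O_m(1)$. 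Removing the distinctness condition and raising to the power $j/2$ gives $C_jL^{j/2}(1+O_m(j^2/L))$.

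For odd $j$, the dominant partitions consist of one triple and $(j-3)/2$ pairs. There are $\binom{j}{3}C_{j-3}$ of them. The triple contributes $\sum_{p\in\mathcal{P}}\frac1p(1-\frac1p)(1-\frac2p)$, which is $L+O_m(1)$. Together this gives $\binom{j}{3}C_{j-3}L^{(j-1)/2}$. One checks from the definitions that $\binom{j}{3}C_{j-3}=\tilde{C}_j$: both equal $j!/(6\cdot2^{(j-3)/2}((j-3)/2)!)$, using the duplication formula. The $O_m(1)$ discrepancies (which involve $b(m,a)$) are absorbed into the relative error $j^{3}/L$ stated in the lemma.

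The main obstacle is bounding all remaining partitions uniformly in $j\leq L^{1/3}$. These are the partitions with some block of size $\geq4$, or with more than one block of odd size $\geq3$. A partition with blocks of sizes $\alpha_i\geq2$ contributes at most $(L+O_m(1))^{s}$, where $s$ is the number of blocks. Following \cite{GS}, we would count the set partitions with a prescribed number $s$ of blocks, all of size $\geq2$. We compare these counts with the principal term: each loss of one block, relative to the perfect or near-perfect pairing, costs a factor $L^{-1}$ but gains at most $O(j^{3})$ (more precisely $O(j^{3/2})$-type per half-step) in the combinatorial count. The condition $j^3\leq L$ makes the resulting series geometric. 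Summed, it gives the relative error $O_m(j^{3}/L)$. This combinatorial estimate is exactly the one in \cite[Proposition 2]{GS}. Only the value of the prime sum changes, from $\log\log z$ to $L$, so the constants depend on $m$ only through the $O_m(1)$ in the Mertens formula.
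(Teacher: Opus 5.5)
Your proposal is correct and follows the paper's proof essentially step by step: the $(n,R)=d$ (periodicity) decomposition giving $x\sum G(p_1\cdots p_j)+O(2^j\pi(z;m,a)^j)$, the grouping into $I_s(j;m,a)$ (your set partitions are the paper's compositions $(\alpha_1,\ldots,\alpha_s)$ with multinomial weights over $q_1<\cdots<q_s$), the main terms from perfect pairings resp.\ one cube plus pairs via the Mertens formula in progressions, and the Granville--Soundararajan count for the terms with $s<\lfloor j/2\rfloor$. Two small points where you should be more careful: the $2^{s}$ per-tuple error needs $|f_p(d)|=1/p\le 1/2$ for $p\nmid d$, since with only $|f_p|\le 1$ the M{\"o}bius count gives $3^{s}$; and your ``geometric'' comparison of the remaining partitions works only if the first ratio is below $1$ (it is at most $j^3/(4L)$), whereas the paper's remainder series carries an extra factor $1/(2l+3)!$ (resp.\ $1/(2l+2)!$), which is why $j^3\le L$ suffices there without tracking that constant.
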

To show Lemma \ref{lemma-sakai}, we recall the function $G(r)$ which is introduced in \cite{GS} and its properties.
\begin{definition}[{\cite[p.~18]{GS}}]\label{teigi-fpn-bbb}
For any primes $p_{1},\ldots, p_{l}$, we put 
\begin{align*}
r:=p_{1}\cdots p_{l}=\prod_{i=1}^{s}q_{i}^{\alpha_{i}},
\end{align*}
where the right-hand side of the above product denotes the prime factorization of $r$. Further, for the above $r$ we write
\begin{align*}
R:=\prod_{i=1}^{s}q_{i},
\end{align*}
which is so called the kernel of $r$. Note that if $(n,R)=d$, then it holds that
\begin{align}\label{nature-fpn-1}
f_{p_{1}\cdots p_{l}}(n)=f_{p_{1}\cdots p_{l}}(d).
\end{align}
Using the function $f_{p}(n)$ in Definition \ref{teigi-fpn-aaa} and the Euler function $\varphi(n)$ we define $G(r)$ for any $r=p_{1}\cdots p_{l}$ as follows.
\begin{align}\label{teigi-Gr}
G(r):=\frac{1}{R}\sum_{d|R}f_{p_{1}\cdots p_{l}}(d)\varphi\left(\frac{R}{d}\right).
\end{align}
\end{definition}
The following properties of $G(r)$ are used in the proof of Lemma \ref{lemma-sakai}, which are stated in \cite{GS} and quoted in \cite{MST}.
\begin{lemma}[{\cite[p.~18, 19]{GS}}]\label{lemma-Gr}
Let $r$ and $r^{\prime}$ be positive integers $\geq 2$. Then we have the following properties of $G(r)$.
\begin{enumerate}
\item[\rm (i)] If $(r,r^{\prime})=1$, then $G\left(rr^{\prime}\right)=G(r)G\left(r^{\prime}\right)$.
\item[\rm (ii)] For the prime factorization of $r$, if $r=\prod_{q^{\alpha}|| r}q^{\alpha}$, then
\begin{align*}
G(r)=\prod_{q^{\alpha} ||r} \left(\frac{1}{q}\left(1-\frac{1}{q}\right)^{\alpha}+\left(-\frac{1}{q}\right)^{\alpha} \left(1-\frac{1}{q}\right)\right).
\end{align*}
\item[\rm (iii)] If $r$ is not square-full (that is, the exponent of some prime is one), then $G(r)=0$.
\item[\rm (iv)] For any prime $q$ and any integer $\alpha \geq 2$, we have
\begin{align*}
0\leq G\left(q^{\alpha}\right) \leq \frac{1}{q}\left(1-\frac{1}{q}\right).
\end{align*}
\end{enumerate}
\end{lemma}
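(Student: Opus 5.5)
The plan is to prove all four parts directly from the definition (\ref{teigi-Gr}): I first obtain the explicit product formula (ii), and then read off (i), (iii) and (iv) from it.

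\emph{The key observation.} By Definition \ref{teigi-fpn-aaa}, for a divisor $d\mid R$ the value $f_{p_{1}\cdots p_{l}}(d)$ depends only on which of the primes $q_{1},\ldots,q_{s}$ divide $d$. Grouping the $p_{j}$ that equal $q_{i}$, we get
\begin{align*}
f_{p_{1}\cdots p_{l}}(d)=\prod_{i=1}^{s}h_{i}(d),\qquad
h_{i}(d):=\begin{cases}\left(1-\frac{1}{q_{i}}\right)^{\alpha_{i}} & (q_{i}\mid d)\\ \left(-\frac{1}{q_{i}}\right)^{\alpha_{i}} & (q_{i}\nmid d)\end{cases}.
\end{align*}
Since $R$ is squarefree, each divisor $d$ of $R$ corresponds uniquely to a choice, for each $i$, of whether $q_{i}\mid d$. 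Moreover $\varphi(R/d)=\prod_{q_{i}\nmid d}(q_{i}-1)$ and $1/R=\prod_{i}1/q_{i}$.

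\emph{Proof of (ii).} The sum over $d\mid R$ in (\ref{teigi-Gr}) is therefore a sum over the product set $\prod_{i}\{q_{i}\mid d,\ q_{i}\nmid d\}$ of a product of local factors, so it factorizes as
\begin{align*}
G(r)=\prod_{i=1}^{s}\frac{1}{q_{i}}\left(\left(1-\frac{1}{q_{i}}\right)^{\alpha_{i}}\varphi(1)+\left(-\frac{1}{q_{i}}\right)^{\alpha_{i}}\varphi(q_{i})\right).
\end{align*}
Using $\varphi(q)/q=1-1/q$, this is exactly the formula in (ii).

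\emph{Proof of (i).} This is immediate from (ii): when $(r,r')=1$, the prime-power factorization of $rr'$ is the disjoint union of those of $r$ and $r'$.

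\emph{Proof of (iii).} If some exponent is $\alpha=1$, the corresponding local factor is
\[
\frac{1}{q}\left(1-\frac{1}{q}\right)-\frac{1}{q}\left(1-\frac{1}{q}\right)=0,
\]
so the whole product vanishes.

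\emph{Proof of (iv).} This is the only step requiring an inequality. For $\alpha\ge2$ I would write
\begin{align*}
G(q^{\alpha})=\frac{1}{q}\left(1-\frac{1}{q}\right)\left(\left(1-\frac{1}{q}\right)^{\alpha-1}+(-1)^{\alpha}q^{-(\alpha-1)}\right),
\end{align*}
so it suffices to show that the bracket lies in $[0,1]$. There are two cases.
\begin{enumerate}
\item[\rm (1)] If $\alpha$ is even, both terms in the bracket are nonnegative. With $t=1/q$ and $n=\alpha-1\ge1$, the bracket equals $(1-t)^{n}+t^{n}$, which is at most $\left((1-t)+t\right)^{n}=1$ by the binomial expansion.
\item[\rm (2)] If $\alpha$ is odd, the bracket is $(1-1/q)^{\alpha-1}-q^{-(\alpha-1)}$. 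It is nonnegative because $q\ge2$ gives $1-1/q\ge1/q$, and it is trivially at most $1$.
\end{enumerate}
I expect no real obstacle here. The only point needing care is this sign analysis in (iv), where $(-1/q)^{\alpha}$ changes sign with the parity of $\alpha$.
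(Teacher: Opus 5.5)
Your proof is correct and complete. Factoring the divisor sum over the squarefree kernel $R$ into local factors gives (ii), and (i), (iii) and the parity-based sign analysis for (iv) follow exactly as you describe. The paper itself does not prove this lemma but quotes it from Granville--Soundararajan, where it is obtained by essentially the same local computation (multiplicativity together with the explicit value of $G(q^{\alpha})$).
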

We shall now prove Lemma \ref{lemma-sakai}.
\begin{proof}[Proof of Lemma \ref{lemma-sakai}]
It is trivial for the case $j=0$. As for the case $j=1$, by the definition of $f_{p}(n)$ (Definition \ref{teigi-fpn-aaa}) we have
\begin{align*}
\sum_{n\leq x}\left(\sum_{\begin{subarray}{c}p\leq z\\ p\equiv a\, (m)\end{subarray}}f_{p}(n)\right)
&=\sum_{\begin{subarray}{c}p\leq z\\ p\equiv a\, (m) \end{subarray}}\left(\sum_{\begin{subarray}{c}n\leq x\\ p|n\end{subarray}}\left(1-\frac{1}{p}\right)
+\sum_{\begin{subarray}{c}n\leq x\\ p\nmid n\end{subarray}}\left(-\frac{1}{p}\right)\right)\\
&=\sum_{\begin{subarray}{c}p\leq z\\ p\equiv a\, (m) \end{subarray}}\left(\left[\frac{x}{p}\right]-\frac{x}{p}+O\left(\frac{1}{p}\right)\right)\\
&=O\left(\pi(z;m,a)\right).
\end{align*}
As for the case $2\leq j\leq k$, using the notation $R$ in Definition \ref{teigi-fpn-bbb} and  noting (\ref{nature-fpn-1}) 
we have as the arguments in \cite[p.~18]{GS}
\begin{align}
\sum_{n\leq x}\left(\sum_{\begin{subarray}{c}p\leq z\\ p\equiv a\, (m)\end{subarray}}f_{p}(n)\right)^{j}
&=\sum_{\begin{subarray}{c}p_{1}, \ldots, p_{j}\leq z\\ p_{i}\equiv a\, (m)\end{subarray}}\sum_{n\leq x}f_{p_{1}\cdots p_{j}}(n)\nonumber \\
&=\sum_{\begin{subarray}{c}p_{1},\ldots, p_{j}\leq z\\ p_{i}\equiv a\, (m)\end{subarray}}\left(\sum_{d|R}f_{p_{1}\cdots p_{j}}(d)
  \sum_{\begin{subarray}{c}n\leq x\\ (n,R)=d\end{subarray}}1\right) . \label{mishima}
\end{align}
By using the M{\"o}bius function $\mu(\cdot)$ we see that
\begin{align*}
\sum_{\begin{subarray}{c}n\leq x \\ (n,R)=d\end{subarray}}1 =\sum_{\delta |\frac{R}{d}}\mu(\delta)\left[\frac{x}{d\delta}\right]
=\frac{x}{R}\varphi\left(\frac{R}{d}\right)+O\left(\sum_{\delta|\frac{R}{d}}|\mu(\delta)|\right).
\end{align*}
We apply this to (\ref{mishima}) and use the function $G(r)$ defined in (\ref{teigi-Gr}). Then, we get
\begin{align}\label{ao-blue}
\sum_{n\leq x}\left(\sum_{\begin{subarray}{c}p\leq z \\ p\equiv a\, (m)\end{subarray}}f_{p}(n)\right)^{j}
=x\sum_{\begin{subarray}{c}p_{1}, \ldots, p_{j}\leq z\\ p_{i}\equiv a\, (m)\\ p_{1}\cdots p_{j}: \textit{square-full}\end{subarray}}G(p_{1}\cdots p_{j})+O\left(2^{j}\pi(z;m,a)^{j}\right).
\end{align}
Here, the $O$-term is derived from the bound:
\begin{align*}
\sum_{d|R}|f_{p_{1}\cdots p_{j}}(d)|\sum_{\delta|\frac{R}{d}}|\mu(\delta)|
&\leq \sum_{d|R}\left(\prod_{\begin{subarray}{c}p_{i}\\ p_{i}|d\end{subarray}}1\prod_{\begin{subarray}{c}p_{i}\\ p_{i}\nmid d\end{subarray}}\frac{1}{2}\right)
       2^{\omega(R)-\omega(d)}\\
&\leq \sum_{d|R}\left(\frac{1}{2}\right)^{\omega(R)-\omega(d)}2^{\omega(R)-\omega(d)}\leq 2^{j}.
\end{align*}
To consider the sum of $G(p_{1}\cdots p_{j})$ in (\ref{ao-blue}), noting (iii) of Lemma \ref{lemma-Gr} we shall split it as follows:
\begin{align}
\sum_{\begin{subarray}{c}p_{1},\ldots, p_{j}\leq z \\ p_{i}\equiv a\, (m)\end{subarray}}G(p_{1}\cdots p_{j})&=\sum_{\begin{subarray}{c}p_{1},\ldots, p_{j}\leq z \\ p_{i}\equiv a\, (m)\\ p_{1}\cdots p_{j}: \textit{square-full}\end{subarray}}G(p_{1}\cdots p_{j})\nonumber \\
&=\sum_{1\leq s \leq \frac{j}{2}}\sum_{\begin{subarray}{c}q_{1}<\cdots <q_{s}\leq z\\ q_{i}\equiv a\, (m)\end{subarray}}
  \sum_{\begin{subarray}{c}\alpha_{1}+\cdots +\alpha_{s}=j\\ \alpha_{i}\geq 2\end{subarray}}\frac{j!}{\alpha_{1}!\cdots \alpha_{s}!}G\left(q_{1}^{\alpha_{1}}\cdots q_{s}^{\alpha_{s}}\right)\nonumber\\
&=:\sum_{s\leq \frac{j}{2}}I_{s}(j;m,a)\quad (say)\nonumber\\
&=\begin{cases}
   I_{\frac{j}{2}}(j;m,a)+\sum_{s\leq \frac{j-2}{2}}I_{s}(j;m,a) & (j\geq 2, even)\\
   I_{\frac{j-1}{2}}(j;m,a)+\sum_{s\leq \frac{j-3}{2}}I_{s}(j;m,a) & (j\geq 3, odd)
\end{cases}.\label{Gr-100}
\end{align}
First, in the case of odd $j\geq 3$ we shall reveal the asymptotic formula for (\ref{Gr-100}).

\noindent{\bf (I)} Let $j\geq 3$ be odd. By (i) of Lemma \ref{lemma-Gr} we have in (\ref{Gr-100})
\begin{align*}
I_{\frac{j-1}{2}}(j;m,a)=\sum_{\begin{subarray}{c}q_{1}<\cdots <q_{\frac{j-1}{2}}\leq z\\ q_{i}\equiv a\, (m)\end{subarray}}
                         \frac{j!}{3!2^{\frac{j-1}{2}-1}}
                         \sum_{i=1}^{\frac{j-1}{2}}
                         G\left(q_{i}^{3}\right)\prod_{\begin{subarray}{c}u=1\\ u\neq i\end{subarray}}^{\frac{j-1}{2}}G\left(q_{u}^{2}\right).
\end{align*}
Here, note that
\begin{align*}
0\leq G\left(p^{3}\right)\leq G\left(p^{2}\right)
\end{align*}
by (ii), (iv) of Lemma \ref{lemma-Gr}, and recall the definition of $\tilde{C}_{j}$ in (\ref{teigi-C-j-911}). Then, we see that
\begin{align}\label{iced-blend-pengin}
I_{\frac{j-1}{2}}(j;m,a)
\begin{cases}
\leq {\displaystyle \tilde{C}_{j}\sum_{\begin{subarray}{c}q_{1},\ldots, q_{\frac{j-1}{2}}\leq z\\ distinct \\ q_{i}\equiv a\, (m)\end{subarray}}\prod_{u=1}^{\frac{j-1}{2}}
                                G\left(q_{u}^{2}\right)}\\
\geq {\displaystyle \tilde{C_{j}}\sum_{\begin{subarray}{c}q_{1},\ldots, q_{\frac{j-1}{2}}\leq z\\ distinct \\ q_{i}\equiv a\, (m)\end{subarray}}\prod_{u=1}^{\frac{j-1}{2}}
                                           G\left(q_{u}^{3}\right)}
\end{cases}.
\end{align}
We apply the following formula (\cite[p.~126, (c)]{MV})
\begin{align}\label{mertens-ma}
\sum_{\begin{subarray}{c}p\leq z\\ p \equiv a\, (m)\end{subarray}}\frac{1}{p}
=\frac{1}{\varphi(m)}\log\log z +b(m,a)+O_{m}\left(\frac{1}{\log z}\right)
\end{align}
to $G\left(q_{u}^{2}\right)$ in the former in (\ref{iced-blend-pengin}). Easily, we observe that
\begin{align}
I_{\frac{j-1}{2}}(j;m,a)&\leq \tilde{C}_{j}\left(\sum_{\begin{subarray}{c}p\leq z\\ p \equiv a\, (m)\end{subarray}}\frac{1}{p}\left(1-\frac{1}{p}\right)\right)^{\frac{j-1}{2}}\nonumber \\
&=\tilde{C_{j}}\left(\frac{1}{\varphi(m)}\log\log z\right)^{\frac{j-1}{2}}\left(1+O_{m}\left(\frac{1}{\frac{1}{\varphi(m)}\log\log z}\right)\right)^{\frac{j-1}{2}}. \label{I-ue-1}
\end{align}
To deduce a lower bound for (\ref{iced-blend-pengin}) we shall denote by $p_{m,a}^{*}$ the least prime $\geq 5$ satisfying $q\equiv a\, (m)$, and by $\pi_{l}$
the $l$th smallest prime in $\{p\leq z\,|\, p\equiv a\, (m)\}$. Also, we write $\pi_{l^{*}}=p_{m,a}^{*}$. 
As for the latter in (\ref{iced-blend-pengin}), 
we have 
\begin{align}
I_{\frac{j-1}{2}}(j;m,a)&\geq \tilde{C}_{j}\sum_{\begin{subarray}{c}p_{m,a}^{*}\leq q_{1},\ldots, q_{\frac{j-1}{2}}\leq z\\ distinct\\ q_{u}\equiv a\, (m)\end{subarray}}
                                           \prod_{u=1}^{\frac{j-1}{2}}G\left(q_{u}^{3}\right)\nonumber \\
                       &=\tilde{C}_{j}\sum_{\begin{subarray}{c}p_{m,a}^{*}\leq q_{1},\ldots, q_{\frac{j-1}{2}}\leq z\\ distinct\\ q_{u}\equiv a\, (m)\end{subarray}}
                                            \prod_{u=1}^{\frac{j-1}{2}-1}G\left(q_{u}^{3}\right)
                                            \sum_{\begin{subarray}{c}p_{m,a}^{*}\leq p \leq z\\ p\not = q_{u} \left(u=1,\ldots, \frac{j-1}{2}-1\right)\\ p\equiv a\, (m)\end{subarray}}G\left(p^{3}\right). \label{standardblend-1}
\end{align}
And since $G(p^{3})$ is decreasing for increasing $p\geq 5$ we observe that
\begin{align}
\textit{RHS of (\ref{standardblend-1})} &\geq \tilde{C}_{j}\left(\sum_{\pi_{l^{*}+\frac{j-3}{2}}\leq p \leq z}G(p^{3})\right)^{\frac{j-1}{2}}\nonumber \\
&\geq \tilde{C}_{j}\left(\sum_{\begin{subarray}{c}p\leq z\\ p\equiv a\, (m)\end{subarray}}G\left(p^{3}\right)-\sum_{\begin{subarray}{c}p\leq \pi_{l^{*}+\frac{j-3}{2}}\\ p\equiv a\, (m) \end{subarray}}G\left(p^{3}\right)\right)^{\frac{j-1}{2}}\nonumber \\
&=\tilde{C}_{j}\left(\frac{1}{\varphi(m)}\log\log z +O_{m}(1)\right)^{\frac{j-1}{2}}\nonumber \\
&=\tilde{C}_{j}\left(\frac{1}{\varphi(m)}\log\log z\right)^{\frac{j-1}{2}}\left(1+O_{m}\left(\frac{j}{\frac{1}{\varphi(m)}\log\log z}\right)\right)^{\frac{j-1}{2}}.\label{I-shita-1}\end{align}
By the assumption $k\leq \left(\frac{1}{\varphi(m)}\log\log z\right)^{1/3}$ we find that
\begin{align}
\left|\left(1+O_{m}\left(\frac{j}{\frac{1}{\varphi(m)}\log\log z}\right)\right)^{\frac{j-1}{2}}-1\right|
&\leq \frac{M_{m}j^{2}}{\frac{1}{\varphi(m)}\log\log z}\sum_{l=1}^{\frac{j-1}{2}}\frac{1}{l!}\left(\frac{M_{m}j^{2}}{\frac{1}{\varphi(m)}\log\log z}\right)^{l-1}\nonumber\\
&\leq \frac{M_{m}j^{2}}{\frac{1}{\varphi(m)}\log\log z}\sum_{l=1}^{\infty}\frac{1}{(l-1)!}, \label{I-shita-2}
\end{align} 
where $M_{m}$ is a positive constant depending on $m$. Therefore collecting (\ref{iced-blend-pengin}), (\ref{I-ue-1}), (\ref{I-shita-1}), and (\ref{I-shita-2}),
we obtain that
\begin{align}\label{I-odd-main}
I_{\frac{j-1}{2}}(j;m,a)=\tilde{C}_{j}\left(\frac{1}{\varphi(m)}\log\log z\right)^{\frac{j-1}{2}}\left(1+O_{m}\left(\frac{j^{2}}{\frac{1}{\varphi(m)}\log\log z}\right)\right).
\end{align}
We shall bound the remainder portion $\sum_{s\leq \frac{j-3}{2}}I_{s}(j;m,a)$ in (\ref{Gr-100}) (note that it is an empty sum if $j=3$). Since $\left|G\left(q_{i}^{\alpha_{i}}\right)\right|\leq \frac{1}{q_{i}}$ (by Lemma \ref{lemma-Gr}) we observe that
\begin{align}
\sum_{s\leq \frac{j-3}{2}}I_{s}(j;m,a)&\leq \sum_{s\leq \frac{j-3}{2}}\frac{j!}{2^{s}}\sum_{\begin{subarray}{c}q_{1}<\cdots <q_{s}\leq z\\ q_{i}\equiv a\, (m)\end{subarray}}\frac{1}{q_{1}\cdots q_{s}}\sum_{\begin{subarray}{c}(\alpha_{1}-1)+\cdots +(\alpha_{s}-1)=j-s\\ \alpha_{i}\geq 2\end{subarray}}1\nonumber \\
&\leq \sum_{s\leq \frac{j-3}{2}}\frac{j!}{2^{s}s!}\left(\sum_{\begin{subarray}{c}p\leq z \\ p\equiv a\, (m)\end{subarray}}\frac{1}{p}\right)^{s}\binom{j-s}{s}\nonumber \\
&=\sum_{s\leq \frac{j-3}{2}}C_{j}\frac{\Gamma\left(\frac{j}{2}+1\right)2^{\frac{j}{2}-s}}{s!}\left(\sum_{\begin{subarray}{c}p\leq z \\ p\equiv a\, (m)\end{subarray}}\frac{1}{p}\right)^{s} \frac{(j-s)!}{s!(j-2s)!}\label{tara-1}
\end{align}
Here, we shall remark that by the assumption $k\leq \left(\frac{1}{\varphi(m)}\log\log z\right)^{1/3}$
\begin{align*}
\left(\sum_{\begin{subarray}{c}p\leq z\\ p\equiv a\, (m)\end{subarray}}\frac{1}{p}\right)^{s}
&\leq \left(\frac{1}{\varphi(m)}\log\log z\right)^{s}\left(1+\frac{M_{m}}{\frac{1}{\varphi(m)}\log\log z}\right)^{s}\\
&\leq \left(\frac{1}{\varphi(m)}\log\log z\right)^{s} \sum_{l=0}^{s}\frac{1}{l!}\left(\frac{M_{m}k}{\frac{1}{\varphi(m)}\log\log z}\right)^{l}\\
&\leq 3 \left(\frac{1}{\varphi(m)}\log\log z\right)^{s}. 
\end{align*}
The right-hand side of (\ref{tara-1}), by putting $l=\frac{j-3}{2}-s$, is bounded as
\begin{align}
\textit{RHS of (\ref{tara-1})}&\ll C_{j}\left(\frac{1}{\varphi(m)}\log\log z\right)^{\frac{j-3}{2}}\times\nonumber\\
&\quad \times  \sum_{l=0}^{\frac{j-3}{2}-1}   \frac{\Gamma\left(\frac{j}{2}+1\right)2^{l+\frac{3}{2}}\left(\frac{j+3}{2}+l\right)!}{\left(\frac{j-3}{2}-l\right)!\left(\frac{j-3}{2}-l\right)!(2l+3)!}\frac{1}{\left(\frac{1}{\varphi(m)}\log\log z\right)^{l}}. \label{tara-2}
\end{align}
Here, note that
\begin{align*}
 \Gamma \left(\frac{j}{2}+1\right) \asymp \frac{\Gamma\left(\frac{j+3}{2}\right)}{\sqrt{j}},\quad
 \frac{\Gamma\left(\frac{j+3}{2}\right)}{\left(\frac{j-3}{2}-l\right)!} \ll j^{2} \left(\frac{j}{2}\right)^{l},\quad
& \frac{\left(\frac{j+3}{2}+l\right)!}{\left(\frac{j-3}{2}-l\right)!}\ll j^{3}2^{2l}.
\end{align*}
Therefore , we have
\begin{align*}
\textit{RHS of (\ref{tara-2})}&\ll j^{9/2} C_{j}\left(\frac{1}{\varphi(m)}\log\log z\right)^{\frac{j-3}{2}}\sum_{l=0}^{\frac{j-5}{2}}\frac{1}{(2l+3)!}\left(\frac{j^{3}}{\frac{1}{\varphi(m)}\log\log z}\right)^{l}\\
&\ll \left(j^{3/2}C_{j}\right)\left(\frac{1}{\varphi(m)}\log\log z\right)^{\frac{j-3}{2}} j^{3},
\end{align*}
that is,
\begin{align}\label{tara-3}
\sum_{s\leq \frac{j-3}{2}}I_{s}(j;m,a)\ll \tilde{C}_{j}\left(\frac{1}{\varphi(m)}\log\log z\right)^{\frac{j-3}{2}}j^{3}.
\end{align}
By (\ref{Gr-100}), (\ref{I-odd-main}), and (\ref{tara-3}) we reach the following formula,
\begin{align*}
\sum_{\begin{subarray}{c}p_{1},\ldots, p_{j}\leq z\\ p_{i}\equiv a\, (m)\end{subarray}}G(p_{1}\cdots p_{j})
=\tilde{C}_{j}\left(\frac{1}{\varphi(m)}\log\log z\right)^{\frac{j-1}{2}}\left(1+O_{m}\left(\frac{j^{3}}{\frac{1}{\varphi(m)}\log\log z}\right)\right).
\end{align*}
We now use this in (\ref{ao-blue}), then we get the assertion (b) of Lemma \ref{lemma-sakai} (odd $j\geq 3$).

\bigskip

\noindent{\bf (II)} Let $j\geq 2$ be even. 
First, by the assumption $k\leq \left(\frac{1}{\varphi(m)}\log\log z\right)^{1/3}$ we note that
\begin{align}\label{tara-books}
\left(1+O_{m}\left(\frac{j}{\frac{1}{\varphi(m)}\log\log z}\right)\right)^{\frac{j}{2}}=1+O_{m}\left(\frac{j^{2}}{\frac{1}{\varphi(m)}\log\log z}\right)
\end{align}
as we have observed in (\ref{I-shita-2}). 
In (\ref{Gr-100}), we have
\begin{align*}
I_{\frac{j}{2}}(j;m,a)=C_{j}\sum_{\begin{subarray}{c}q_{1},\ldots, q_{\frac{j}{2}}\leq z\\ distinct \\ q_{i}\equiv a\, (m)\end{subarray}}\prod_{u=1}^{\frac{j}{2}}G(q_{u}^{2}),
\end{align*}
and
\begin{align}\label{tara-4}
I_{\frac{j}{2}}(j;m,a)\leq C_{j}\left(\frac{1}{\varphi(m)}\log\log z\right)^{\frac{j}{2}}\left(1+O_{m}\left(\frac{1}{\frac{1}{\varphi(m)}\log\log z}\right)\right)^{\frac{j}{2}}.
\end{align}
Note that $G(p^{2})$ is decreasing for increasing $p\geq 2$. We take the prime $p_{m,a}^{*}$ which is the least prime satisfying $q\equiv a\, (m)$. 
As in the above argument (I),
we write $\pi_{l^{*}}=p_{m,a}^{*}$. We observe that
\begin{align}
I_{\frac{j}{2}}(j;m,a)& \geq C_{j}\left(\sum_{\begin{subarray}{c}\pi_{l^{*}+\frac{j-2}{2}}\leq p \leq z\\ p\equiv a\, (m)\end{subarray}}G\left(p^{2}\right)\right)^{\frac{j}{2}}\nonumber\\
&\geq C_{j}\left(\sum_{\begin{subarray}{c}p\leq z\\ p\equiv a\, (m)\end{subarray}}\frac{1}{p}+O_{m}(j)\right)^{\frac{j}{2}}\nonumber \\
&= C_{j}\left(\frac{1}{\varphi(m)}\log\log z\right)^{\frac{j}{2}}\left(1+O_{m}\left(\frac{j}{\frac{1}{\varphi(m)}\log\log z}\right)\right)^{\frac{j}{2}}.\label{tara-5}
\end{align}
From (\ref{tara-books}), (\ref{tara-4}), and (\ref{tara-5}) we have
\begin{align}\label{I-main-even}
I_{\frac{j}{2}}(j;m,a)=C_{j}\left(\frac{1}{\varphi(m)}\log\log z\right)^{\frac{j}{2}}\left(1+O_{m}\left(\frac{j^{2}}{\frac{1}{\varphi(m)}\log\log z}\right)\right).
\end{align}
Next, as we have seen in (\ref{tara-1}) and (\ref{tara-2}) we can bound $\sum_{s\leq \frac{j-2}{2}}I_{s}(j;m,a)$ in (\ref{Gr-100}) as follows 
(note that the sum is an empty sum if $j=2$):
\begin{align}
\sum_{s\leq \frac{j-2}{2}}I_{s}(j;m,a)& \ll C_{j}\left(\frac{1}{\varphi(m)}\log\log z\right)^{\frac{j-2}{2}}\times\nonumber\\
&\quad \times \sum_{s\leq \frac{j-2}{2}}\frac{\Gamma\left(\frac{j}{2}+1\right)2^{\frac{j}{2}-s}(j-s)!}{s!s!(j-2s)!}\frac{1}{\left(\frac{1}{\varphi(m)}\log\log z\right)^{\frac{j-2}{2}-s}}\nonumber\\
&= C_{j}\left(\frac{1}{\varphi(m)}\log\log z\right)^{\frac{j-2}{2}}\times\nonumber\\
&\quad \times \sum_{l=0}^{\frac{j-2}{2}-1}\frac{\left(\frac{j}{2}\right)! 2^{l+1}\left(\frac{j}{2}+1+l\right)!}{(\frac{j-2}{2}-l)! \left(\frac{j-2}{2}-l\right)!(2l+2)!}\frac{1}{\left(\frac{1}{\varphi(m)}\log\log z\right)^{l}}. \label{kusai-na-tonari}
\end{align}
We use here
\begin{align*}
\frac{\left(\frac{j}{2}\right)!2^{l+1}}{\left(\frac{j-2}{2}-l\right)!}\leq j\cdot j^{l}\quad and \quad \frac{\left(\frac{j}{2}+1+l\right)!}{\left(\frac{j-2}{2}-l\right)!}\ll j^{2}\cdot j^{2l} 
\end{align*}
to bound (\ref{kusai-na-tonari}) and obtain
\begin{align}\label{kyo-nani-tsukutta}
\sum_{s\leq \frac{j-2}{2}}I(j;m,a) \ll C_{j}\left(\frac{1}{\varphi(m)}\log\log z\right)^{\frac{j-2}{2}}j^{3}.
\end{align}
Collecting results of (\ref{Gr-100}), (\ref{I-main-even}), and (\ref{kyo-nani-tsukutta}) we obtain
\begin{align*}
\sum_{\begin{subarray}{c}p_{1},\ldots, p_{j}\leq z\\ p_{i}\equiv a\, (m)\end{subarray}}G(p_{1}\cdots p_{j})
=C_{j}\left(\frac{1}{\varphi(m)}\log\log z\right)^{\frac{j}{2}}\left(1+O_{m}\left(\frac{j^{3}}{\frac{1}{\varphi(m)}\log\log z}\right)\right).
\end{align*}
Hence, by this and (\ref{ao-blue}) we get the assertion (a) of Lemma \ref{lemma-sakai}. 
\end{proof}

\begin{corollary}\label{kei-lemma-sakai}
Keep the setting in Lemma \ref{lemma-sakai}.
Further, we assume that $z\leq mx^{1/k}$. Then, for $0\leq j\leq k$, uniformly in $k$, $z$, and $x$ we have
\begin{align*}
\sum_{n\leq x}\left(\sum_{\begin{subarray}{c}p\leq z\\ p \equiv a\, (m)\end{subarray}}f_{p}(n)\right)^{j}
\ll_{m} C_{j}x \left(\frac{1}{\varphi(m)}\log\log z\right)^{\frac{j}{2}}.
\end{align*}
\end{corollary}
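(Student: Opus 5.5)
The corollary follows from Lemma \ref{lemma-sakai} once the error term $O(2^{j}\pi(z;m,a)^{j})$ is shown to be negligible under the extra assumption $z\leq mx^{1/k}$. We treat the cases $j=0,1$, even $j\geq 2$, and odd $j\geq 3$ separately.

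For $j=0$ the bound is trivial, since $C_{0}=1$. For $j=1$, part (c) of Lemma \ref{lemma-sakai} gives $O(\pi(z;m,a))\ll z\ll_{m} x^{1/k}\leq x$. Because $C_{1}=\Gamma(2)/(\sqrt{2}\,\Gamma(3/2))\asymp 1$, this is at most $C_{1}x\left(\frac{1}{\varphi(m)}\log\log z\right)^{1/2}$.

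Now let $2\leq j\leq k$. Trivially $\pi(z;m,a)\leq z\leq mx^{1/k}$, so
\begin{align*}
2^{j}\pi(z;m,a)^{j}\leq (2m)^{j}x^{j/k}\leq (2m)^{j}x.
\end{align*}
The factor $(2m)^{j}$ is not bounded in terms of $m$ alone, so it has to be absorbed by the main-term size. We use $C_{j}\gg 2^{j}$, in fact $C_{j}\sim\sqrt{2}(j/e)^{j/2}$. Then $(2m)^{j}/C_{j}\ll (2m\sqrt{e/j})^{j}$. This is $O_{m}(1)$ uniformly in $j$: it is bounded for $j\leq 4em^{2}$ by a constant depending only on $m$, and it is less than $1$ for larger $j$. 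Hence $2^{j}\pi(z;m,a)^{j}\ll_{m} C_{j}x\leq C_{j}x\left(\frac{1}{\varphi(m)}\log\log z\right)^{j/2}$, since $\log\log z$ is large.

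For even $j$, part (a) of Lemma \ref{lemma-sakai} gives the main term $C_{j}x(\cdot)^{j/2}(1+O_{m}(j^{3}/(\frac{1}{\varphi(m)}\log\log z)))$. The hypothesis $k\leq (\frac{1}{\varphi(m)}\log\log z)^{1/3}$ makes the $O$-term $O_{m}(1)$, so the main term is $\ll_{m} C_{j}x(\cdot)^{j/2}$.

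For odd $j$, part (b) gives $\tilde{C}_{j}x(\cdot)^{(j-1)/2}$ up to a factor $O_{m}(1)$. By the introduction, $\tilde{C}_{j}\asymp j^{3/2}C_{j}$. Hence
\begin{align*}
\tilde{C}_{j}\Bigl(\tfrac{1}{\varphi(m)}\log\log z\Bigr)^{\frac{j-1}{2}}\ll C_{j}\Bigl(\tfrac{1}{\varphi(m)}\log\log z\Bigr)^{\frac{j}{2}}\cdot\frac{j^{3/2}}{\bigl(\tfrac{1}{\varphi(m)}\log\log z\bigr)^{1/2}},
\end{align*}
and $j^{3/2}\leq (\frac{1}{\varphi(m)}\log\log z)^{1/2}$ because $j\leq k\leq(\frac{1}{\varphi(m)}\log\log z)^{1/3}$. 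So this term is also $\ll C_{j}x(\cdot)^{j/2}$.

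The step needing the most care is absorbing $(2m)^{j}$ uniformly in $j$. Using the growth $C_{j}\approx (j/e)^{j/2}$, rather than only $C_{j}\gg 2^{j}$, settles it with a constant depending on $m$ alone. Alternatively, one could use that $\pi(z;m,a)\leq z/\varphi(m)+1$, but the Stirling comparison is cleaner.
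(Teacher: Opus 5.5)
Your proof is correct and follows the paper's route: apply Lemma \ref{lemma-sakai}, absorb the error $O(2^{j}\pi(z;m,a)^{j})$ into $C_{j}x$ using the growth of $C_{j}$, and for odd $j$ use $\tilde{C}_{j}\asymp j^{3/2}C_{j}$ with $j^{3/2}\leq(\frac{1}{\varphi(m)}\log\log z)^{1/2}$. The one difference is in bounding $\pi(z;m,a)$: the paper uses $\pi(z;m,a)\ll z/m$, from the trivial $\pi(z;m,a)\leq z/m+1$ (not the weaker $z/\varphi(m)+1$ of your closing remark), so $(z/m)^{j}\leq x^{j/k}\leq x$ and only $2^{j}\ll C_{j}$ is needed, whereas you use $\pi(z;m,a)\leq z$ and absorb the extra factor $m^{j}$ via $C_{j}\asymp (j/e)^{j/2}$, which is equally valid.
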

\begin{proof}
By the assumption $z\leq mx^{1/k}$ in Lemma \ref{lemma-sakai} we have
\begin{align*}
2^{j}\pi(z;m,a)^{j}\ll C_{j}\left(\frac{z}{m}\right)^{j}\ll C_{j}x.
\end{align*}
Then, we get the assertion for even integer $j\geq 0$, immediately. For odd integer $j\geq 1$ in Lemma \ref{lemma-sakai}
we observe that
\begin{align*}
\sum_{n\leq x}\left(\sum_{\begin{subarray}{c}p\leq z\\ p\equiv a\, (m)\end{subarray}}f_{p}(n)\right)^{j}
&\ll_{m} \tilde{C}_{j}\left(\frac{1}{\varphi(m)}\log\log z\right)^{\frac{j-1}{2}}+C_{j}x\\
&\ll_{m} j^{\frac{3}{2}}C_{j}\left(\frac{1}{\varphi(m)}\log\log z\right)^{\frac{j}{2}}\frac{1}{\left(\frac{1}{\varphi(m)}\log\log z\right)^{\frac{1}{2}}}\\
&\ll_{m} C_{j}\left(\frac{1}{\varphi(m)}\log\log z\right)^{\frac{j}{2}}.
\end{align*}
\end{proof}
%
%
%
\section{Proof of Theorem \ref{Natsuki}}
Let $m\geq 1$ be a fixed integer, $a$ an integer satisfying $(a,m)=1$. Regarding $\omega_{z}(n;m,a)$ which is introduced in (\ref{go}),
we shall derive the main theorem (Theorem \ref{Natsuki}) in this notes from Lemma \ref{lemma-sakai}.
Further, as we have discussed Theorem \ref{Hatsumi} for any fixed integer $k\geq 2$ in the previous our notes \cite{MST}, we shall here consider
Theorem \ref{Natsuki} for any fixed integer $k\geq 2$. 

First, we shall give an expression for $\omega_{z}(n;m,a)$, which follows from an idea in \cite{GS}. By Definition \ref{teigi-fpn-aaa} of $f_{p}(n)$ and the formula (\ref{mertens-ma}).
Observe that
\begin{align}
\omega_{z}(n;m,a)&=\sum_{\begin{subarray}{c}p|n\\ p\leq z \\p\equiv a\,(m)\end{subarray}}\left(1-\frac{1}{p}+\frac{1}{p}\right)\nonumber \\
                 &=\sum_{\begin{subarray}{c}p|n\\ p\leq z\\ p\equiv a\, (m)\end{subarray}}\left(1-\frac{1}{p}\right)+\sum_{\begin{subarray}{c}p\leq z\\ p\equiv a\, (m)\end{subarray}}\frac{1}{p}-\sum_{\begin{subarray}{c}p\nmid n\\ p\leq z\\ p\equiv a\, (m)\end{subarray}}\frac{1}{p}\nonumber \\
&=\sum_{\begin{subarray}{c}p\leq z\\ p\equiv a\, (m)\end{subarray}}f_{p}(n) +\frac{1}{\varphi(m)}\log\log z +b(m,a)+O_{m}\left(\frac{1}{\log z}\right). \label{cococo}
\end{align}
Therefore for any integer $k\geq 1$, real numbers $z\geq 1$, and $x\geq 1$  we have
\begin{align}
&\sum_{n\leq x}\left(\omega_{z}(n;m,a)-\frac{1}{\varphi(m)}\log\log z\right)^{k}\nonumber \\
&=\sum_{j=0}^{k-2}\binom{k}{j}\left(b(m,a)+O_{m}\left(\frac{1}{\log z}\right)\right)^{k-j}\sum_{n\leq x}\left(\sum_{\begin{subarray}{c}p\leq z\\ p\equiv a\, (m)\end{subarray}}f_{p}(n)\right)^{j}\nonumber \\
&\quad +k b(m,a)\sum_{n\leq x}\left(\sum_{\begin{subarray}{c}p\leq z\\ p\equiv a\, (m)\end{subarray}}f_{p}(n)\right)^{k-1} 
+O_{m}\left(\frac{k}{\log z}\right)\sum_{n\leq x}\left(\sum_{\begin{subarray}{c}p\leq z\\ p\equiv a\, (m)\end{subarray}}f_{p}(n)\right)^{k-1} \nonumber \\
&\quad +\sum_{n\leq x}\left(\sum_{\begin{subarray}{c}p\leq z\\ p\equiv a\, (m)\end{subarray}}f_{p}(n)\right)^{k}. \label{Maki-Horikita-Umeko}
\end{align}
To prove Theorem \ref{Natsuki} we examine the right-hand side of (\ref{Maki-Horikita-Umeko}) by Lemma \ref{lemma-sakai}.

\begin{proof}[Proof of Theorem \ref{Natsuki}]
Let $x\geq 1$ and $z\geq 1$ be sufficiently large. Further, we add the assumptions that 
\begin{align*}
2\leq k\leq \left(\frac{1}{\varphi(m)}\log\log z\right)^{1/3}\quad \text{and} \quad z\leq mx^{1/k}.
\end{align*}
\noindent{\bf (I)} Let $k\geq 3$ be odd. By (b) of Lemma \ref{lemma-sakai} we have in (\ref{Maki-Horikita-Umeko}) 
\begin{align}
&\sum_{n\leq x}\left(\sum_{\begin{subarray}{c}p\leq z\\ p\equiv a\, (m)\end{subarray}}f_{p}(n)\right)^{k}\nonumber \\
&=\tilde{C}_{k}x\left(\frac{1}{\varphi(m)}\log\log z\right)^{\frac{k-1}{2}}\nonumber \\
&\quad +O_{m}\left((k^{3/2}C_{k})x\left(\frac{1}{\varphi(m)}\log\log z\right)^{\frac{k-1}{2}}\frac{k^{3}}{\frac{1}{\varphi(m)}\log\log z}\right).\label{nishiki-3-1}
\end{align}
By (a) of Lemma \ref{lemma-sakai} we have
\begin{align}
&kb(m,a)\sum_{n\leq x}\left(\sum_{\begin{subarray}{c}p\leq z\\ p\equiv a\, (m)\end{subarray}}f_{p}(n)\right)^{k-1}\nonumber \\
&=b(m,a)\left(k^{3/2}C_{k}\right)x\left(\frac{1}{\varphi(m)}\log\log z\right)^{\frac{k-1}{2}}\frac{C_{k-1}}{k^{1/2}C_{k}}\nonumber \\
&\quad +O_{m}\left(\left(k^{3/2}C_{k}\right)x\left(\frac{1}{\varphi(m)}\log\log z\right)^{\frac{k-1}{2}}\frac{k^{2}}{\frac{1}{\varphi(m)}\log\log z}\right) \nonumber\\
&\quad +O_{m}\left(\left(k^{3/2}C_{k}\right)x\frac{1}{k}\right). \label{nishiki-3-2}
\end{align}
Also, we get
\begin{align}
& O_{m}\left(\frac{k}{\log z}\right)\sum_{n\leq x}\left(\sum_{\begin{subarray}{c}p\leq z\\ p\equiv a\, (m)\end{subarray}}f_{p}(n)\right)^{k-1}\nonumber\\
& \ll_{m} \left(k^{3/2}C_{k}\right)x \left(\frac{1}{\varphi(m)}\log\log z\right)^{\frac{k-1}{2}}\frac{1}{k\log z}. \label{nishiki-3-3}
\end{align}
By Corollary \ref{kei-lemma-sakai} we observe that
\begin{align}
&\sum_{j=0}^{k-2}\binom{k}{j}\left(b(m,a)+O_{m}\left(\frac{1}{\log z}\right)\right)^{k-j}\sum_{n\leq x}\left(\sum_{\begin{subarray}{c}p\leq z\\ p\equiv a\, (m)\end{subarray}}f_{p}(n)\right)^{j}\nonumber \\
&\ll \sum_{j=0}^{k-2}\binom{k}{j}M_{m}^{k-j} C_{j}x\left(\frac{1}{\varphi(m)}\log\log z\right)^{\frac{j}{2}}\nonumber \\
&=C_{k}x\left(\frac{1}{\varphi(m)}\log\log z\right)^{\frac{k-2}{2}}\sum_{j=0}^{k-2}\binom{k}{j}M_{m}^{k-j}\frac{C_{j}}{C_{k}}\frac{1}{\left(\frac{1}{\varphi(m)}\log\log z\right)^{\frac{k-2}{2}-\frac{j}{2}}}\nonumber \\
&\asymp C_{k}x\left(\frac{1}{\varphi(m)}\log\log z\right)^{\frac{k-2}{2}}\times \nonumber \\
&\quad\quad\quad\quad \times \sum_{j=0}^{k-2}\binom{k}{j}M_{m}^{k-j}
         \left(\frac{j}{e}\right)^{\frac{j}{2}}\left(\frac{k}{e}\right)^{-\frac{k}{2}}
         \frac{1}{\left(\frac{1}{\varphi(m)}\log\log z\right)^{\frac{k-2}{2}-\frac{j}{2}}}\nonumber \\
&\ll_{m} C_{k}x\left(\frac{1}{\varphi(m)}\log\log z\right)^{\frac{k-2}{2}}k\sum_{j=0}^{k-2}\binom{k-2}{j}\left(\frac{M_{m}(e/k)^{1/2}}{\left(\frac{1}{\varphi(m)}\log\log z\right)^{1/2}}\right)^{(k-2)-j}\nonumber \\
&=O_{m}\left((k^{3/2}C_{k})x\left(\frac{1}{\varphi(m)}\log\log z\right)^{\frac{k-2}{2}}\frac{1}{\sqrt{k}}\right). \label{nishiki-3-4}
\end{align}
Combining results (\ref{Maki-Horikita-Umeko}), (\ref{nishiki-3-1}), (\ref{nishiki-3-2}),(\ref{nishiki-3-3}), and  (\ref{nishiki-3-4}) we obtain that
\begin{align}
& \sum_{n\leq x}\left(\omega_{z}(n;m,a)-\frac{1}{\varphi(m)}\log\log z\right)^{k}\nonumber\\
&=\tilde{C}_{k}x\left(\frac{1}{\varphi(m)}\log\log z\right)^{\frac{k-1}{2}} +b(m,a)(k^{3/2}C_{k})x\left(\frac{1}{\varphi(m)}\log\log z\right)^{\frac{k-1}{2}}\frac{C_{k-1}}{k^{1/2}C_{k}}\nonumber\\
&\quad +O_{m}\left((k^{3/2}C_{k})\left(\frac{1}{\varphi(m)}\log\log z\right)^{\frac{k-1}{2}}\times\right. \nonumber\\
&\quad\quad\quad\quad\quad \left. \times \max\left(\frac{k^{3}}{\frac{1}{\varphi(m)}\log\log z}, \frac{1}{k^{1/2}\left(\frac{1}{\varphi(m)}\log\log z\right)^{1/2}}\right)\right). \label{ryo-tonari}
\end{align}
We shall note that
$\frac{k^{3}}{\frac{1}{\varphi(m)}\log\log z}\geq \frac{1}{k^{1/2}\left(\frac{1}{\varphi(m)}\log\log z\right)^{1/2}}$
is equivalent to 
$k\leq \left(\frac{1}{\varphi(m)}\log\log z\right)^{1/7}$. 
For $3\leq k\leq \left(\frac{1}{\varphi(m)}\log\log z\right)^{1/7}$,
we see that
\begin{align*}
\frac{C_{k-1}}{C_{k}} \asymp \frac{1}{\sqrt{k}} \gg \frac{1}{\left(\frac{1}{\varphi(m)}\log\log z\right)^{1/14}}\geq \frac{1}{\left(\frac{1}{\varphi(m)}\log\log z\right)^{1/2}}.
\end{align*}
Therefore, from (\ref{ryo-tonari}) we reach the assertion (i) in (b) of Theorem \ref{Natsuki}.

Next, let $\left(\frac{1}{\varphi(m)}\log\log z\right)^{1/7}<k\leq \left(\frac{1}{\varphi(m)}\log\log z\right)^{1/3}$. In this case, we note that
$\frac{k^{3}}{\frac{1}{\varphi(m)}\log\log z}\geq \frac{1}{k}$ is equivalent to $k\geq \left(\frac{1}{\varphi(m)}\log\log z\right)^{1/4}$. Hence,
we obtain (ii), (iii) in (b) of Theorem \ref{Natsuki}.

\bigskip

\noindent{\bf (II)} Let $k\geq 2$ be even. In (\ref{Maki-Horikita-Umeko}), first we use (a) of Lemma \ref{lemma-sakai} to obtain
\begin{align}
&\sum_{n\leq x}\left(\sum_{\begin{subarray}{c}p\leq z\\ p\equiv a\, (m)\end{subarray}}f_{p}(n)\right)^{k}\nonumber\\
&=C_{k}x\left(\frac{1}{\varphi(m)}\log\log z\right)^{\frac{k}{2}}+O_{m}\left(C_{k}x\left(\frac{1}{\varphi(m)}\log\log z\right)^{\frac{k}{2}-1}k^{3}\right). \label{nagoya-1}
\end{align}
By (b) of Lemma \ref{lemma-sakai} (the upper bound), we have
\begin{align}
&\left(kb(m,a)+O_{m}\left(\frac{k}{\log z}\right)\right)\sum_{n\leq x}\left(\sum_{\begin{subarray}{c}p\leq z\\ p\equiv a\, (m)\end{subarray}}f_{p}(n)\right)^{k-1} \nonumber \\
&\ll_{m} k \tilde{C}_{k-1}x \left(\frac{1}{\varphi(m)}\log\log z\right)^{\frac{k-2}{2}}\ll_{m} C_{k}x \left(\frac{1}{\varphi(m)}\log\log z\right)^{\frac{k}{2}-1}k^{2}. \label{nagoya-2}
\end{align}
And, by Corollary \ref{kei-lemma-sakai} we observe that
\begin{align}
& \sum_{j=0}^{k-2}\binom{k}{j}\left(b(m,a)+O_{m}\left(\frac{1}{\log z}\right)\right)^{k-j}\sum_{n\leq x}\left(\sum_{\begin{subarray}{c}p\leq z\\ p\equiv a\, (m)\end{subarray}}f_{p}(n)\right)^{j}\nonumber \\
& \ll C_{k}x \left(\frac{1}{\varphi(m)}\log\log z\right)^{\frac{k-2}{2}}\frac{e}{k}\times \nonumber \\
&\quad\quad  \times \sum_{j=0}^{k-2}\binom{k}{j}M_{m}^{2}M_{m}^{k-2-j}
\left(\left(\frac{e}{k}\right)^{1/2}\right)^{k-2-j}\frac{1}{\left(\left(\frac{1}{\varphi(m)}\log\log z\right)^{1/2}\right)^{k-j}} \nonumber \\
& \ll_{m} C_{k}x \left(\frac{1}{\varphi(m)}\log\log z\right)^{\frac{k-2}{2}}k \sum_{j=0}^{k-2}\binom{k-2}{j}
\left(\frac{M_{m}\left(\frac{e}{k}\right)^{1/2}}{\left(\frac{1}{\varphi(m)}\log\log z\right)^{1/2}}\right)^{k-2-j} \nonumber\\
&\ll_{m} C_{k}x\left(\frac{1}{\varphi(m)}\log\log z\right)^{\frac{k-2}{2}}k. \label{nagoya-3} 
\end{align}
Combining (\ref{Maki-Horikita-Umeko}), (\ref{nagoya-1})--(\ref{nagoya-3}) we finally obtain the assertion (a) of Theorem \ref{Natsuki}.
\end{proof}
%
%
%
%
Although the integer $k$ is not fixed in the above, we shall now reconsider the assertion in Theorem \ref{Natsuki} for any fixed integer $k\geq 2$.
We follow the argument in \cite{MST} and prove the next theorem.

%
%
\begin{theorem}\label{kurushi-1}
Keep the notation the above. Let $k\geq 2$ be a fixed integer. Moreover, let $m\geq 1$ be a fixed and $a$ an integer satisfying $(a,m)=1$.
For sufficiently large $x$ and $z\geq 1$ with the restriction $z\leq mx^{1/k}$, we have
\begin{align}
&\sum_{n\leq x}\left(\omega_{z}(n;m,a)-\frac{1}{\varphi(m)}\log\log z\right)^{k}\nonumber\\
&=x\sum_{j=0}^{\left[\frac{k}{2}\right]}a_{j}\left(\frac{\log\log z}{\varphi(m)}\right)^{j}
  +\begin{cases} O\left(\frac{x(\log\log z)^{\frac{k-2}{2}}}{\log z}\right) & (k\geq 2, even)\\
                 O\left(\frac{x(\log\log z)^{\frac{k-1}{2}}}{\log z}\right) & (k\geq 3, odd) 
\end{cases}, \label{utsu-wa}
\end{align}
where
\begin{align*}
a_{\left[\frac{k}{2}\right]}
=
\begin{cases}
C_{k} & (k\geq 2, even)\\
\tilde{C}_{k}+kb(m,a)C_{k-1} & (k\geq 3, odd)
\end{cases}.
\end{align*}
\end{theorem}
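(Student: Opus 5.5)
With $k$ fixed I would redo the expansion (\ref{Maki-Horikita-Umeko}) and keep every term exactly, up to an error of relative size $1/\log z$. The first step is to use (\ref{cococo}) in the form
\begin{align*}
\omega_{z}(n;m,a)-\frac{1}{\varphi(m)}\log\log z=F_{z}(n)+\beta_{z},
\end{align*}
where $F_{z}(n)=\sum_{p\leq z,\,p\equiv a\,(m)}f_{p}(n)$ and $\beta_{z}=b(m,a)+O_{m}(1/\log z)$. The quantity $\beta_{z}$ does not depend on $n$. Expanding binomially gives
\begin{align*}
\sum_{n\leq x}\Bigl(\omega_{z}(n;m,a)-\tfrac{1}{\varphi(m)}\log\log z\Bigr)^{k}=\sum_{j=0}^{k}\binom{k}{j}\beta_{z}^{k-j}S_{j},\qquad S_{j}:=\sum_{n\leq x}F_{z}(n)^{j}.
\end{align*}
Since $k$ is fixed, replacing $\beta_{z}^{k-j}$ by $b(m,a)^{k-j}$ costs $O(|S_{j}|/\log z)$. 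Corollary \ref{kei-lemma-sakai} gives $|S_{j}|\ll x(\log\log z)^{j/2}$. For $j\leq k-1$ this is at most $x(\log\log z)^{(k-1)/2}/\log z$, which is admissible in the odd case. In the even case I will use that $S_{k-1}$ is itself an odd moment of size $x(\log\log z)^{(k-2)/2}$.

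The second step is to obtain, for each fixed $j$, an exact polynomial expansion of $S_{j}$. By (\ref{ao-blue}),
\begin{align*}
S_{j}=x\sum G(p_{1}\cdots p_{j})+O(2^{j}\pi(z;m,a)^{j}),
\end{align*}
and the hypothesis $z\leq mx^{1/k}$ makes the error $O(x)$. This is not good enough, because an $O(x)$ error is not $O(x/\log z)$ in general, so I will need a sharper error. The remedy is to keep $\pi(z;m,a)^{j}\ll (z/\log z)^{j}$, which gives $O(x/(\log z)^{j})$ for $j\geq 1$ (with the implied constants depending on $m$). The case $j=0$ is exact, since $S_{0}=[x]=x+O(1)$.

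The main sum is handled through the decomposition (\ref{Gr-100}). Each $I_{s}(j;m,a)$ is a finite combination, with coefficients depending only on $j$, of sums over distinct primes $q_{1},\dots,q_{s}\leq z$, $q_{i}\equiv a\,(m)$, of products $\prod G(q_{i}^{\alpha_{i}})$. I would remove the distinctness condition by inclusion--exclusion; the result is a polynomial in the power sums $\sum_{q\leq z,\,q\equiv a\,(m)}G(q^{\alpha})^{t}$. There are then two cases.
\begin{enumerate}
\item[\rm (i)] For $\alpha=2$, $t=1$ we have $G(q^{2})=\frac{1}{q}(1-\frac{1}{q})$. Formula (\ref{mertens-ma}) gives $\frac{1}{\varphi(m)}\log\log z+c+O_{m}(1/\log z)$ for a constant $c$.
\item[\rm (ii)] For $\alpha\geq 3$, or $t\geq 2$, the sum converges, since $G(q^{\alpha})\ll q^{-2}$ for $\alpha\geq 3$ by Lemma \ref{lemma-Gr}(ii). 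Its tail is $O(1/z)$.
\end{enumerate}
Consequently $\sum G(p_{1}\cdots p_{j})$ equals a polynomial in $\frac{1}{\varphi(m)}\log\log z$ of degree $[j/2]$ with constant coefficients, up to an error $O((\log\log z)^{[j/2]-1}/\log z)$. Its leading coefficient is $C_{j}$ for even $j$ and $\tilde{C}_{j}$ for odd $j$. For odd $j$ this follows from the formula for $I_{(j-1)/2}$ in the proof of Lemma \ref{lemma-sakai}: there exactly one prime has exponent $3$, and $\sum G(q^{3})$ converges, so its contribution is a constant. One point needs care here. The lemma's lower bound used $G(q^{3})$ for all primes; it must be checked that the constant multiplying $(\log\log z)^{(j-1)/2}$ really is $\tilde{C}_{j}$ as the statement asserts, or else a different constant. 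I would simply carry whatever explicit constant the combinatorics produces, and reconcile it with the definition of $\tilde{C}_{j}$ by checking the case $j=3$.

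The last step is to substitute into the binomial sum and collect powers of $\log\log z$. For even $k$ the top power $(\log\log z)^{k/2}$ comes only from $S_{k}$, with coefficient $C_{k}$. For odd $k$ the top power $(\log\log z)^{(k-1)/2}$ comes from two sources:
\begin{enumerate}
\item[\rm (i)] $S_{k}$, which contributes $\tilde{C}_{k}$;
\item[\rm (ii)] $k\,b(m,a)S_{k-1}$, which contributes $k\,b(m,a)C_{k-1}$.
\end{enumerate}
Every other term $S_{j}$ with $j\leq k-2$ has degree at most $(k-2)/2$. This yields $a_{[k/2]}$ as stated. All error terms are $O(x(\log\log z)^{[k/2]-\delta}/\log z)$ with $\delta\in\{0,1\}$, matching (\ref{utsu-wa}).

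I expect the main obstacle to be the bookkeeping that every error is genuinely $O(1/\log z)$ in relative size rather than $O(1)$. This concerns the $2^{j}\pi(z;m,a)^{j}$ term and the tails of the convergent prime sums. Those tails are only $O(1/z)$, so they cause no trouble.
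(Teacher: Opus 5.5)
\textbf{There is a genuine gap: your case (ii) is false for $t=1$, and the $\tilde C_k$ term rests on it.}

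Your architecture is the same as the paper's. You expand (\ref{cococo}) binomially, pass each $S_j$ through (\ref{ao-blue}) and the decomposition (\ref{Gr-100}), and reduce sums over distinct primes to single-prime sums. You do this last step by inclusion--exclusion where the paper uses Proposition~\ref{ai-ni-makeruna}. Your bound $2^{j}\pi(z;m,a)^{j}\ll_m x/(\log z)^{j}$ is correct, and more careful than (\ref{gisu-gisu-100}).

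The error is in case (ii) with $t=1$. By Lemma~\ref{lemma-Gr}(ii),
$G(q^{\alpha})=\frac1q(1-\frac1q)^{\alpha}+(-\frac1q)^{\alpha}(1-\frac1q)=\frac1q+O(q^{-2})$ for \emph{every} $\alpha\geq 2$; for instance $G(q^{3})=\frac1q(1-\frac1q)(1-\frac2q)$. So $G(q^{\alpha})\ll q^{-2}$ is false for $\alpha\geq 3$. Every single-prime sum diverges:
$\sum_{q\leq z,\,q\equiv a\,(m)}G(q^{\alpha})=\frac{1}{\varphi(m)}\log\log z+B(\alpha;m,a)+O_m(1/\log z)$, which is (\ref{gpa-wa-34}). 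Only the block sums $\sum_{q}\prod_{i\in B}G(q^{\alpha_i})$ with $|B|\geq 2$, i.e.\ two or more factors at the same prime, converge. The lower bound in Lemma~\ref{lemma-sakai} that you cite rests precisely on $\sum G(p^{3})=\frac{1}{\varphi(m)}\log\log z+O_m(1)$.

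This is exactly where $\tilde C_k$ comes from, and as written your proposal is internally inconsistent. If $\sum G(q^{3})$ converged, $I_{(k-1)/2}(k;m,a)$ would have degree only $(k-3)/2$ in $\log\log z$. Then $S_k$ would contribute nothing at $(\log\log z)^{(k-1)/2}$, and your own bookkeeping would give $a_{(k-1)/2}=k\,b(m,a)C_{k-1}$, not $\tilde C_k+k\,b(m,a)C_{k-1}$. The $j=3$ check you propose exposes this: $S_3=x\sum_{q}G(q^{3})+O(x/(\log z)^{3})$ has coefficient $1=\tilde C_3$ on $\frac{\log\log z}{\varphi(m)}$.

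With the correct asymptotics, the argument goes through:
\begin{itemize}
\item Each $I_s$ is a polynomial of degree $s$ plus $O((\log\log z)^{s-1}/\log z)$. Its leading coefficient comes from replacing every single-prime sum by $\frac{1}{\varphi(m)}\log\log z$.
\item For odd $k$ and $s=\frac{k-1}{2}$, exactly one exponent equals $3$. The sum over $q_1<\cdots<q_s$ with one marked prime equals $\frac{1}{(s-1)!}$ times a sum over distinct ordered tuples. This gives the leading coefficient
\[
\frac{k!}{3!\,2^{(k-3)/2}}\cdot\frac{1}{\left(\frac{k-3}{2}\right)!}=\tilde C_k ,
\]
using $\Gamma(\frac{k+1}{2})=(\frac{k-1}{2})!$ in (\ref{teigi-C-j-911}). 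This is the paper's final computation.
\end{itemize}
Your even-$k$ coefficient $C_k$ and your error bookkeeping are unaffected once (ii) is corrected.
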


To prove this we shall introduce a function and some constants related to the function $G(r)$ in (\ref{teigi-Gr}).

\begin{definition}
For any non-empty finite set $A:=\{\alpha_{1},\ldots, \alpha_{l}\}$ of integers $\alpha_{i} \geq 2$, any prime $p$, and the function $G(r)$ in (\ref{teigi-Gr}),
we define
\begin{align*}
G(p,A):=G\left(p^{\alpha_{1}}\right) \cdots G\left(p^{\alpha_{l}}\right).
\end{align*} 
Let $m\geq 1$ be a fixed integer, and $a$ an intger satisfying $(a,m)=1$. Recall the formula (\ref{mertens-ma}). We see that
\begin{align}\label{gpa-wa-34}
\sum_{\begin{subarray}{c}p\leq z\\ p \equiv a\, (m)\end{subarray}}G(p,A)
=\begin{cases} \frac{1}{\varphi(m)} \log\log z +B(\alpha_{1};m,a)+O\left(\frac{1}{\log z}\right) & (A=\{\alpha_{1}\}),\\
               D_{m,a}(A)+O\left(\frac{1}{z^{\sharp A -1}}\right) & (\sharp A\geq 2), 
\end{cases}
\end{align}
where $B(\alpha; m,a)$ and $D_{m,a}(A)$ are constants. 
For simplicity we might write $D_{m,a}(A)=D_{m,a}(\alpha_{1}, \ldots, \alpha_{l})$.
\end{definition}

The following proposition is a key to derive Theorem \ref{kurushi-1}.
\begin{proposition}\label{ai-ni-makeruna}
Let $k\geq 2$ be a fixed integer and keep $(a,m)=1$, where $m\geq 1$ is a fixed integer.
For $L (\geq 1)$ many finite sets $A_{1},\ldots, A_{L}$ those are set of integers $\geq 2$, we have for sufficiently large $z\geq e^{e}$
\begin{align*}
&\sum_{\begin{subarray}{c}p_{1},\ldots, p_{L}\leq z\\ distinct\\ p_{i}\equiv a\, (m)\end{subarray}}G(p_{1}, A_{1})\cdots G(p_{L},A_{L})\\
&=\sum_{j=0}^{L}\mathcal{L}_{j}(m,a)\left(\frac{\log\log z}{\varphi(m)}\right)^{j} +O\left(\frac{(\log\log z)^{L-1}}{\log z}\right),
\end{align*}
where $\mathcal{L}_{j}(m,a)$ are constants. Especially, in the case $\sharp A_{1}=\cdots = \sharp A_{L}=1$, we have $\mathcal{L}_{L}(m,a)=1$.
\end{proposition}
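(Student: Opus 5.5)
We will prove Proposition \ref{ai-ni-makeruna} by induction on $L$, removing the distinctness condition by inclusion--exclusion. For $L=1$ the claim is exactly (\ref{gpa-wa-34}): if $\sharp A_{1}=1$ we get $\frac{\log\log z}{\varphi(m)}+B(\alpha_{1};m,a)+O(1/\log z)$, so $\mathcal{L}_{1}=1$ and $\mathcal{L}_{0}=B(\alpha_{1};m,a)$. If $\sharp A_{1}\geq 2$ we get $D_{m,a}(A_{1})+O(z^{-1})$, so $\mathcal{L}_{1}=0$, and the error is certainly $O(1/\log z)$.

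For the inductive step ($L\geq 2$), write the sum over distinct $p_{1},\ldots,p_{L}$ as follows. Take the sum over distinct $p_{1},\ldots,p_{L-1}$ times the full sum over $p_{L}\leq z$, $p_{L}\equiv a\,(m)$, and subtract the terms with $p_{L}=p_{i}$ for some $i\leq L-1$. Since the $p_{i}$ are distinct, at most one $i$ can coincide with $p_{L}$, so the subtracted part is exactly
\begin{align*}
\sum_{i=1}^{L-1}\sum_{\begin{subarray}{c}p_{1},\ldots,p_{L-1}\leq z\\ distinct\\ p_{u}\equiv a\,(m)\end{subarray}}\prod_{u\neq i}G(p_{u},A_{u})\,G(p_{i},A_{i}\uplus A_{L}),
\end{align*}
where $A_{i}\uplus A_{L}$ is the multiset union. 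By definition $G(p,A_{i}\uplus A_{L})=G(p,A_{i})G(p,A_{L})$, so the definition of $G(p,A)$ should be read for multisets. Nothing changes there, and (\ref{gpa-wa-34}) still holds since $0\leq G(p^{\alpha})\leq 1/p$ by Lemma \ref{lemma-Gr}(iv). Each subtracted term is a sum of the same shape with $L-1$ primes. By the induction hypothesis it is a polynomial of degree $\leq L-1$ in $\frac{\log\log z}{\varphi(m)}$ plus $O((\log\log z)^{L-2}/\log z)$, which is well within the required error.

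The main term is the product of two factors. The first is the $(L-1)$-fold distinct sum, which by induction equals $P_{L-1}(\frac{\log\log z}{\varphi(m)})+O((\log\log z)^{L-2}/\log z)$ with $P_{L-1}$ of degree $\leq L-1$. The second is the $p_{L}$-sum, which by (\ref{gpa-wa-34}) equals either $\frac{\log\log z}{\varphi(m)}+B+O(1/\log z)$ or $D+O(1/z)$. Multiplying out, all cross error terms are $O((\log\log z)^{L-1}/\log z)$, since $P_{L-1}(\cdot)\ll(\log\log z)^{L-1}$ and the $p_{L}$-sum is $\ll \log\log z$. Hence the whole sum is a polynomial of degree $\leq L$ in $\frac{\log\log z}{\varphi(m)}$ with constant coefficients depending on $m,a$ and the $A_{i}$, plus the claimed error.

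For the leading coefficient when all $\sharp A_{i}=1$, we track only the top degree. The subtracted terms have degree $\leq L-1$. By induction, the main product has top coefficient $\mathcal{L}_{L-1}\cdot 1=1$. Hence $\mathcal{L}_{L}(m,a)=1$.

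The only delicate point is the bookkeeping in the inclusion--exclusion: we must check that merging $A_{L}$ into $A_{i}$ keeps us inside the class of admissible index (multi)sets, so that the induction hypothesis (and (\ref{gpa-wa-34})) applies. We must also check that the error terms, which for $\sharp A\geq 2$ are even better ($O(z^{-1})$), never exceed $(\log\log z)^{L-1}/\log z$; the condition $z\geq e^{e}$ ensures $\log\log z\geq 1$. Since $k$, and hence $L\leq k$, is fixed, constants depending on $L$ are harmless.
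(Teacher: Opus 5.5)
Your proposal is correct and follows the same route as the paper, which only says the proposition is proved by induction on $L$ as in the authors' earlier work: you split off $p_L$, remove the diagonal terms $p_L=p_i$ by merging $A_L$ into $A_i$, apply the hypothesis to the resulting $(L-1)$-fold sums, and track the leading coefficient correctly. Your remark that the $A_i$ must be read as multisets is needed for the induction to close, and it matches the paper's own use of constants such as $D_{m,a}(2,2)$ and $D_{m,a}(2,2,2)$.
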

\begin{proof}
It is proved by induction without difficulty as in \cite{MST}.
\end{proof}

To prove Theorem \ref{kurushi-1} we shall go back to (\ref{ao-blue}), (\ref{Gr-100}), and investigate an $I_{s}(l;m,a)$ in (\ref{Gr-100}).
Recall some notations in \cite{MST} for this purpose.

\begin{definition}\label{nls-definition-kyo}
As for $I_{s}(l;m,a)$ in (\ref{Gr-100}), put
\begin{align*}
N(l,s):=\{(\alpha_{1},\ldots, \alpha_{s})\, |\, \alpha_{1}+\cdots +\alpha_{s}=l,\ \alpha_{i}\geq 2\}.
\end{align*}
Since $s\leq k/2$, it is not empty. We shall define an equivalent relation $\simeq_{1}$ on the set $N(l,s)$.
If $(\alpha_{1},\ldots, \alpha_{s})$ and $(\alpha_{1}^{\prime}, \ldots, \alpha_{s}^{\prime})\in N(l,s)$ satisfy
$\{\alpha_{1},\ldots, \alpha_{s}\}=\{\alpha_{1}^{\prime}, \ldots, \alpha_{s}^{\prime}\}$, then we write
$(\alpha_{1},\ldots, \alpha_{s})\simeq_{1} (\alpha_{1}^{\prime}, \ldots, \alpha_{s}^{\prime})$. And we represent the partition of $N(l,s)$
by this relation $\simeq_{1}$ as
\begin{align}\label{parti-Nls-303}
N(l,s)=\bigsqcup_{j=1}^{n(l,s)}N_{j}(l,s).
\end{align}
We shall write for each class $N_{j}(l,s)$ in the partition
\begin{align*}
c(N_{j}(l,s)):=\frac{l!}{\alpha_{1}!\cdots \alpha_{s}!}.
\end{align*}
Moreover, for a representative $(\alpha_{1},\ldots, \alpha_{s})$ of a class $N_{j}(l,s)$,
we denote by $\beta_{1},\ldots, \beta_{d}$ all distinct $\alpha_{i}$ and by $m(\beta_{j})$ the multiplicity of $\beta_{j}$, that is,
\begin{align*}
m(\beta_{j}):=\sum_{\begin{subarray}{c}\alpha_{i}\\ \alpha_{i}=\beta_{j}\\ (\alpha_{1},\ldots, \alpha_{s})\\ (\textit{a representative of $N_{j}(l,s)$})\end{subarray}}1
\end{align*}
\end{definition}

Using these $\beta_{1},\ldots, \beta_{d}$ of a class $N_{j}(l,s)$ we introduce an equivalent relation on $S_{s}$
which is the set of the permutations of $\{1,2,\ldots, s\}$.

\begin{definition}\label{AYATAKA}
Let $\beta_{1},\ldots, \beta_{d}$ be the integers for a representative $(\alpha_{1},\ldots, \alpha_{s})$ of a class $N_{j}(l,s)$ in (\ref{parti-Nls-303}).
We put labels $i(u,u^{\prime})$ on indexes $i$ of $\alpha_{i}$ as
\begin{align*}
\beta_{1}&=\alpha_{i(1,1)}, \alpha_{i(1,2)},\ldots, \alpha_{i(1,m(\beta_{1}))},\\
\beta_{2}&=\alpha_{i(2,1)}, \alpha_{i(2,2)},\ldots, \beta_{i(2,m(\beta_{2}))},\\
         &\cdots\\
\beta_{d}&=\alpha_{i(d,1)}, \alpha_{i(d,2)}, \ldots, \beta_{i(d,m(\beta_{d}))}. 
\end{align*}
Next, for any $\beta_{j}$ ($j=1,\ldots, d$) and $\sigma \in S_{s}$ (the set of the permutations of $\{1,\ldots, s\}$) we set
\begin{align*}
L(\sigma, \beta_{j}):=\{\nu\,|\, \nu=\sigma^{-1}(i(j,1)), \sigma^{-1}(i(j,2)),\ldots, \sigma^{-1}(j,m(\beta_{j}))\}.
\end{align*}
Here, we define an equivalent relation $\simeq_{2}$ on the set $S_{s}$. If $\sigma$ and $\sigma^{\prime}\in S_{s}$ satisfy that
\begin{align*}
L(\sigma, \beta_{1})=L(\sigma^{\prime}, \beta_{1}), \ldots, L(\sigma, \beta_{d})=L(\sigma^{\prime}, \beta_{d}),
\end{align*}
then we write $\sigma \simeq_{2}\sigma^{\prime}$. Since the number of equivalent classes by $\simeq_{2}$ is $\frac{s!}{m(\beta_{1})!\cdots m(\beta_{d})!}$,
we shall represent the partition of $S_{s}$ as
\begin{align*}
S_{s}=\bigsqcup_{\iota =1}^{\frac{s!}{m(\beta_{1})!\cdots m(\beta_{d})!}}T_{\iota}(\beta_{1},\ldots, \beta_{d}; N_{j}(l,s)).
\end{align*}
Moreover, we shall express the complete system of representative of $S_{s}$ by $\simeq_{2}$ as
\begin{align}
T(\beta_{1}, \ldots, \beta_{d}; N_{j}(l,s))
=\left\{ \tau_{1}, \ldots, \tau_{\frac{s!}{m(\beta_{1})!\cdots m(\beta_{d})!}}\right\}. \label{parti-S_s-100}
\end{align}
\end{definition}

We shall prove Theorem \ref{kurushi-1}, briefly, since the process of the proof is similar to the argument in \cite{MST},
we may omit details.

\begin{proof}[Proof of Theorem \ref{kurushi-1}]
First, by (\ref{cococo}) and Lemma \ref{lemma-sakai} we observe that
\begin{align}
&\sum_{n\leq x}\left(\omega_{z}(n;m,a)-\frac{1}{\varphi(m)}\log\log z\right)^{k} \nonumber\\
&=\sum_{l=2}^{k}\binom{k}{l}b(m,a)^{k-l}\sum_{n\leq x}\left(\sum_{\begin{subarray}{c}p\leq z\\ p\equiv a\, (m)\end{subarray}}f_{p}(n)\right)^{l} +b(m,a)^{k}x \nonumber\\
&\quad +\begin{cases} O\left(\frac{x(\log\log z)^{\frac{k-2}{2}}}{\log z}\right) & (k\geq 2, even)\\
                      O\left(\frac{x(\log\log z)^{\frac{k-1}{2}}}{\log z}\right) & (k\geq 3, odd)
\end{cases}. \label{kocha-cookies}
\end{align}
Here, by (\ref{ao-blue}) we get
\begin{align}
\sum_{n\leq x}\left(\sum_{\begin{subarray}{c}p\leq z p\equiv a\, (m)\end{subarray}}f_{p}(n)\right)^{l}
=x\sum_{\begin{subarray}{c}p_{1},\ldots, p_{l}\leq z \\ p_{i}\equiv a\, (m) \\ p_{1}\cdots p_{l}: \textit{square-full}\end{subarray}}G(p_{1}\cdots p_{l}) 
+O\left(\frac{x}{\log z}\right). \label{gisu-gisu-100}
\end{align}
As in (\ref{Gr-100}) for the sum of $G(\cdot)$ we shall write
\begin{align}\label{suna-kuzureru}
\sum_{\begin{subarray}{c}p_{1},\ldots, p_{l}\leq z \\ p_{i}\equiv a\, (m) \\ p_{1}\cdots p_{l}: \textit{square-full}\end{subarray}}G(p_{1}\cdots p_{l})
=:\sum_{s\leq l/2}I_{s}(l;m,a).
\end{align}
In each $I_{s}(l;m,a)$ in  (\ref{suna-kuzureru}) we note that
\begin{align}\label{suna-kuzureru-2}
I_{s}(l;m,a)
=\sum_{j=1}^{n(l,s)}c(N_{j}(l,s))\sum_{\forall (\alpha_{1},\ldots, \alpha_{s})\in N_{j}(l,s)}
\sum_{\begin{subarray}{c}q_{1}<\cdots <q_{s}\leq z\\ q_{i}\equiv a\, (m)\end{subarray}}G\left(q_{1}^{\alpha_{1}}\right)\cdots G\left(q_{s}^{\alpha_{s}}\right)
\end{align}
by Definition \ref{nls-definition-kyo}.

We now take a class $N_{j}(l,s)$ in each $I_{s}(l;m,a)$ in (\ref{suna-kuzureru}), where the class is defined by (\ref{parti-Nls-303}) 
(in Definition \ref{nls-definition-kyo}), and we choose a representative $(\alpha_{1},\ldots, \alpha_{s})$ of the class $N_{j}(l,s)$.
Moreover, let $S_{s}$ be the set of the permutations of $\{1,\ldots, s\}$, and $T(\beta_{1},\ldots, \beta_{d};N_{j}(l,s))$ be the complete system
of the representative of $S_{s}$ defined by (\ref{parti-S_s-100}) (in Definition \ref{AYATAKA}).
Then, we observe that in $I_{s}(l;m,a)$ (in (\ref{suna-kuzureru}))
\begin{align}
&\sum_{\begin{subarray}{c}q_{1},\ldots, q_{s}\leq z\\ \textit{distinct}\\ q_{i}\equiv a\, (m)\\
((\alpha_{1},\ldots, \alpha_{s}); \textit{a representative of $N_{j}(l,s)$})\end{subarray}}G\left(q_{1}^{\alpha_{1}}\right)\cdots G\left(q_{s}^{\alpha_{s}}\right)\nonumber\\
&= \sum_{\sigma \in S_{s}}
   \sum_{\begin{subarray}{c}q_{\sigma(1)}<\cdots <q_{\sigma(s)}\leq z\\ q_{i}\equiv a\, (m)\end{subarray}}
   G\left(q_{1}^{\alpha_{1}}\right)\cdots G\left(q_{s}^{\alpha_{s}}\right) \nonumber \\
       &=m(\beta_{1})!\cdots m(\beta_{d})! \sum_{\tau \in T(\beta_{1},\ldots, \beta_{d}; N_{j}(l,s))} 
                                            \sum_{\begin{subarray}{c}q_{\sigma(1)}<\cdots <q_{\sigma(s)}\leq z\\ q_{i}\equiv a\, (m)\end{subarray}} 
                                             G\left(q_{1}^{\alpha_{\tau(1)}}\right)\cdots G\left(q_{s}^{\alpha_{\tau(s)}}\right)\nonumber \\
       &=m(\beta_{1})!\cdots m(\beta_{d})! 
         \sum_{\forall (\alpha_{1},\ldots, \alpha_{s})\in N_{j}(l,s)}
         \sum_{\begin{subarray}{c}q_{\sigma(1)}<\cdots <q_{\sigma(s)}\leq z\\ q_{i}\equiv a\, (m)\end{subarray}}
         G\left(q_{1}^{\alpha_{1}}\right)\cdots G(q_{s}^{\alpha_{s}}). \label{Harumi-Mikan}
\end{align}
Using (\ref{Harumi-Mikan}) in (\ref{suna-kuzureru-2}) we get
\begin{align*}
I_{s}(l;m,a)=\sum_{j=1}^{n(l,s)}\frac{c(N_{j}(l,s))}{m(\beta_{1})!\cdots m(\beta_{d})!}
              \hspace{-1.5cm}\sum_{\begin{subarray}{c}q_{1},\ldots, q_{s}\leq z\\ \textit{distinct}\\ q_{i}\equiv a\, (m)\\
((\alpha_{1},\ldots, \alpha_{s}); \textit{a representative of $N_{j}(l,s)$})\end{subarray}}\hspace{-1.5cm} G\left(q_{1}^{\alpha_{1}}\right)\cdots G\left(q_{s}^{\alpha_{s}}\right).
\end{align*}
Next, we apply Proposition \ref{ai-ni-makeruna} to the above, then we obtain
\begin{align}
I_{s}(l;m,a)=\sum_{J=0}^{s}a_{J}^{(l)}\left(\frac{\log\log z}{\varphi(m)}\right)^{J}+O\left(\frac{(\log\log z)^{s-1}}{\log z}\right), \label{FAMIMA33}
\end{align}
where for $J=s$
\begin{align*}
a_{s}^{(l)}=\sum_{j=1}^{n(l,s)}\frac{c(N_{j}(l,s))}{m(\beta_{1})!\cdots m(\beta_{d})!}.
\end{align*}
We use (\ref{FAMIMA33}) in (\ref{suna-kuzureru}), then
\begin{align*}
\sum_{\begin{subarray}{c}p_{1},\ldots, p_{l}\leq z \\ p_{i}\equiv a\, (m) \\ p_{1}\cdots p_{l}: \textit{square-full}\end{subarray}}G(p_{1}\cdots p_{l})
=\sum_{J=0}^{\left[\frac{l}{2}\right]}a_{J}^{\prime(l)}\left(\frac{\log\log z}{\varphi(m)}\right)^{J} +O\left(\frac{(\log\log z)^{\left[\frac{l}{2}\right]-1}}{\log z}\right).
\end{align*} 
Moreover, we apply this to (\ref{gisu-gisu-100}), and we use the formula for $\sum_{n\leq x}\left(\sum_{p\leq z}f_{p}(n)\right)^{l}$ which is obtained by the process
to (\ref{kocha-cookies}), then we reach the assertion (\ref{utsu-wa}) of Theorem \ref{kurushi-1}.

If $k\geq 2$ is even in (\ref{utsu-wa}), we see that
\begin{align*}
a_{\left[\frac{k}{2}\right]}=a_{\frac{k}{2}}
=\sum_{j=1}^{n(k,k/2)}\frac{c\left(N_{j}\left(k,\frac{k}{2}\right)\right)}{m(\beta_{1})!\cdots m(\beta_{d})!}
=\frac{c\left(N\left(k,\frac{k}{2}\right)\right)}{\left(\frac{k}{2}\right)!}=\frac{k!}{2^{k/2}\left(\frac{k}{2}\right)!}=C_{k}.
\end{align*}
If $k\geq 3$ is odd in (\ref{utsu-wa}), we observe that
\begin{align*}
&a_{\left[\frac{k}{2}\right]}=a_{\frac{k}{2}-\frac{1}{2}}\\
                            &=\sum_{j=1}^{n\left(k, \frac{k-1}{2}\right)}\frac{c\left(N_{j}(k,\frac{k-1}{2})\right)}{m(\beta_{1})!\cdots m(\beta_{d})!}
                              +kb(m,a)\sum_{j=1}^{n\left(k-1,\frac{k-1}{2}\right)}\frac{c\left(N_{j}\left(k-1,\frac{k-1}{2}\right)\right)}{m(\beta_{1})!\cdots m(\beta_{d})!}\\
&=\frac{c\left(N \left(k,\frac{k-1}{2}\right)\right)}{1!\left(\frac{k-3}{2}\right)!} +kb(m,a) C_{k-1}\\
&=\frac{1}{\left(\frac{k-3}{2}\right)!}\frac{k!}{3!2^{\frac{k-3}{2}}} +kb(m,a)C_{k-1}\\
&=\tilde{C}_{k}+kb(m,a)C_{k-1}. 
\end{align*}
We complete the proof of Theorem \ref{kurushi-1}.
\end{proof}

In the cases $k=2,3,\ldots, 6$ for Theorem \ref{kurushi-1}, we can obtain explicit asymptotic formulas.
\begin{theorem}\label{okura-1}
Let $m\geq 1$ be a fixed integer and $(a,m)=1$. Keep the notation as in the above. For sufficiently large $x\geq 1$ and $z\geq 1$,
we have the following formulas.
\begin{enumerate}
\item[\rm (a)] If $z\leq mx^{1/2}$, then
\begin{align*}
& \sum_{n\leq x}\left(\omega_{z}(n;m,a)-\frac{1}{\varphi(m)}\log\log z\right)^{2}\\
&=x\left(\frac{\log\log z}{\varphi(m)}\right)+ \left(b(m,a)^{2}+B(2;m,a)\right)x +O\left(\frac{x}{\log z}\right).
\end{align*}
\item[\rm (b)] If $z\leq mx^{1/3}$, then
\begin{align*}
&  \sum_{n\leq x}\left(\omega_{z}(n;m,a)-\frac{1}{\varphi(m)}\log\log z\right)^{3}\\
&= (1+3b(m,a)) x \left(\frac{\log\log z}{\varphi(m)}\right)\\
&\quad +\left(b(m,a)^{3} +3b(m,a)B(2;m,a)+B(3;m,a)\right)x +O\left(\frac{x\log\log z}{\log z}\right).
\end{align*}
\item[\rm (c)] If $z\leq mx^{1/4}$, then
\begin{align*}
&  \sum_{n\leq x}\left(\omega_{z}(n;m,a)-\frac{1}{\varphi(m)}\log\log z\right)^{4}\\
&= 3x \left(\frac{\log\log z}{\varphi(m)}\right)^{2}\\
&\quad +\left(6B(2;m,a)+4b(m,a)+6b(m,a)^{2}+1\right)x\left(\frac{\log\log z}{\varphi(m)}\right)\\
&\quad + \left(3B(2;m,a)^{2}-3D_{m,a}(2,2) +B(4;m,a) \right. \\
&\quad\quad\quad   \left.    +4b(m,a)B(3;m,a)+6b(m,a)^{2}B(2;m,a)+b(m,a)^{4}\right)x\\
&\quad +O\left(\frac{x\log\log z}{\log z}\right)
\end{align*}

\item[\rm (d)] If $z\leq mx^{1/5}$, then
\begin{align*}
& \sum_{n\leq x}\left(\omega_{z}(n;m,a)-\frac{1}{\varphi(m)}\log\log z\right)^{5}\\
&= (10+15b(m,a))x\left(\frac{\log\log z}{\varphi(m)}\right)^2 \\
&\quad +\left(10(B(2;m,a)+B(3;m,a))+1\right.\\
&\quad\quad\quad   +30b(m,a)B(2;m,a)+5b(m,a) \\
&\quad\quad\quad \left. +10b(m,a)^{2}+10b(m,a)^{3} \right) x\left(\frac{\log\log z}{\varphi(m)}\right)\\
&\quad +\left(10B(2;m,a)B(3;m,a)-10D_{m,a}(3,2)+B(5;m,a)\right.\\
&\quad +15b(m,a)B(2;m,a)^{2} -15b(m,a)D_{m,a}(2,2) +5b(m,a)B(4;m,a)\\
&\quad +\left. 10b(m,a)^{2}B(3;m,a) +10b(m,a)^{3}B(2;m,a)+5b(m,a)^{5}\right) x\\
&\quad +O\left(\frac{x(\log\log z)^{2}}{\log z}\right)
\end{align*}
\item[\rm (e)] If $z\leq mx^{1/6}$, then
\begin{align*}
& \sum_{n\leq x}\left(\omega_{z}(n;m,a)-\frac{1}{\varphi(m)}\log\log z\right)^{6}\\
&= 15 x\left(\frac{\log\log z}{\varphi(m)}\right)^{3}\\
&\quad +\left(45B(2;m,a)+60b(m,a)+45b(m,a)^{2}+25\right)x\left(\frac{\log\log z}{\varphi(m)}\right)^{2}\\
&\quad +\left(45\left(B(2;m,a)^{2}-D_{m,a}(2,2)\right)\right.\\
&\quad\quad +15\left(B(4;m,a)+B(2;m,a)\right)+20B(3;m,a)+1\\
&\quad\quad  +6b(m,a)\left( 10(B(2;m,a)+B(3;m,a)) +1 \right)\\
&\quad\quad + 15b(m,a)^{2}\left(6B(2;m,a)+1\right)\\
&\quad\quad +\left. 20b(m,a)^{3} +15b(m,a)^{4} \right) x\left(\frac{\log\log z}{\varphi(m)}\right)\\
&\quad +\left(15B(2;m,a)^{3}-45\left(D_{m,a}(2,2)B(2;m,a)-D_{m,a}(2,2,2)\right)\right.\\
&\quad\quad -15D_{m,a}(2,2,2) +15\left(B(4;m,a)B(2;m,a)-D_{m,a}(4,2)\right)\\
&\quad\quad +10\left(B(3;m,a)^{2}-D_{m,a}(3,3)\right)+B(6;m,a)\\
&\quad\quad +6b(m,a)\left(10B(2;m,a)B(3;m,a)-10D_{m,a}(3,2)+B(5;m,a)\right)\\
&\quad\quad +15b(m,a)^{2} \left(3B(2;m,a)^{2}-3D_{m,a}(2,2)+B(4;m,a)\right)\\
&\quad\quad +\left. 20b(m,a)^{3} B(3;m,a) +15b(m,a)^{4}B(2;m,a) +b(m,a)^{6} \right)x\\
&\quad +O\left(\frac{x(\log\log z)^{2}}{\log z}\right). 
\end{align*}
\end{enumerate}
Here, $O$-constants depend on $m$.
\end{theorem}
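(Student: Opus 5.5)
This is a matter of making the proof of Theorem \ref{kurushi-1} explicit for $k=2,\ldots,6$. Write $F_{l}:=x^{-1}\sum_{n\leq x}\bigl(\sum_{p\leq z,\,p\equiv a\,(m)}f_{p}(n)\bigr)^{l}$ and $Y:=\frac{\log\log z}{\varphi(m)}$. The plan has three steps.

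\textbf{Binomial expansion.} Start from (\ref{cococo}) and expand binomially as in (\ref{kocha-cookies}). This gives
\begin{align*}
x^{-1}\sum_{n\leq x}\left(\omega_{z}(n;m,a)-Y\right)^{k}=\sum_{l=0}^{k}\binom{k}{l}b(m,a)^{k-l}F_{l}+(\text{error}),
\end{align*}
with $F_{0}=1+O(1/x)$ and $F_{1}=O(\pi(z;m,a)/x)$. The $O_{m}(1/\log z)$ perturbation of $b(m,a)$ contributes only $O\bigl(Y^{[(k-1)/2]}/\log z\bigr)$, because Corollary \ref{kei-lemma-sakai} gives $F_{l}\ll Y^{l/2}$. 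This error fits inside the claimed ones.

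\textbf{Each $F_{l}$ as a sum of $G$-values.} By (\ref{ao-blue}), $F_{l}$ equals the sum of $G(p_{1}\cdots p_{l})$ over square-full products, up to $O(2^{l}\pi(z;m,a)^{l}/x)$. Since $z\leq mx^{1/k}$, this last term is $O(1/\log z)$ after a harmless adjustment; if needed I would use $\pi(z;m,a)\ll z/\log z$. I then list, for each $2\leq l\leq 6$, the exponent partitions of $l$ into parts $\geq 2$, each weighted by $l!/(\alpha_{1}!\cdots\alpha_{s}!\,m(\beta_{1})!\cdots m(\beta_{d})!)$. The partitions are:
\begin{itemize}
\item $l=2$: $\{2\}$;
\item $l=3$: $\{3\}$;
\item $l=4$: $\{4\}$ and $\{2,2\}$ with weight $3$;
\item $l=5$: $\{5\}$ and $\{3,2\}$ with weight $10$;
\item $l=6$: $\{6\}$, $\{4,2\}$ with weight $15$, $\{3,3\}$ with weight $10$, and $\{2,2,2\}$ with weight $15$.
\end{itemize}

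\textbf{Evaluating the distinct-prime sums.} For each partition I evaluate the sum over distinct primes explicitly, as in Proposition \ref{ai-ni-makeruna}. Removing the distinctness condition by inclusion–exclusion (Möbius inversion on set partitions) gives
\begin{align*}
\sum_{p\neq q}G(p^{\alpha})G(q^{\beta})=S_{\alpha}S_{\beta}-D_{m,a}(\alpha,\beta),
\end{align*}
and for three primes
\begin{align*}
\sum_{\text{distinct}}G(p^{2})G(q^{2})G(r^{2})=S_{2}^{3}-3S_{2}D_{m,a}(2,2)+2D_{m,a}(2,2,2).
\end{align*}
Here $S_{\alpha}:=\sum_{p\leq z,\,p\equiv a\,(m)}G(p^{\alpha})=Y+B(\alpha;m,a)+O(1/\log z)$ by (\ref{gpa-wa-34}). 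Tails of the $D$-sums are $O(1/z)$.

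\textbf{Main obstacle.} Substituting these into the binomial expansion and collecting powers of $Y$ produces the stated coefficients, for example $3Y^{2}+6B(2;m,a)Y+\cdots$ for $F_{4}$. Errors multiply at most $Y^{[k/2]-1}$ by $1/\log z$. The main obstacle is the bookkeeping for $k=5,6$: the cross terms $B(\alpha)Y$ and the $D$-combinations must be tracked carefully, and the resulting constants checked against the displayed formulas. Some of those displayed formulas (for instance the $5b(m,a)^{5}$ term) may need correction, so I would recompute each coefficient independently rather than trust the statement.
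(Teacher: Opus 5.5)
Your plan is correct and follows the paper's route: the paper proves Lemma \ref{jyokigen-yamada} with the same partition weights and the same inclusion--exclusion over distinct primes combined with (\ref{gpa-wa-34}), then substitutes into (\ref{kocha-cookies}). You are also right that the $5b(m,a)^{5}$ in (d) is a misprint for $b(m,a)^{5}$, which comes from the $l=0$ term $\binom{5}{0}b(m,a)^{5}$.

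One small fix is needed in your error analysis. For even $k$, the $O_{m}(1/\log z)$ shift of $b(m,a)$ multiplies $F_{k-1}$ with $k-1$ odd. Corollary \ref{kei-lemma-sakai} only gives $F_{k-1}\ll(\log\log z)^{(k-1)/2}$, which is too weak for the stated $O\bigl(x(\log\log z)^{(k-2)/2}/\log z\bigr)$. Use Lemma \ref{lemma-sakai}(b) instead, or your explicit formulas showing $F_{3}\asymp\log\log z$ and $F_{5}\asymp(\log\log z)^{2}$.
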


To prove Theorem \ref{okura-1}
we prepare explicit asymptotic formulas for $\sum_{n\leq x}\left(\sum_{\begin{subarray}{c}p\leq z\\ p\equiv a\, (m)\end{subarray}}f_{p}(n)\right)^{k}$
($k=2,\ldots, 6$).

\begin{lemma}\label{jyokigen-yamada}
Keep the situation of Theorem \ref{okura-1}. We have the following formulas.

\begin{enumerate}
\item[\rm (a)] If $z\leq m x^{1/2}$, then
\begin{align*}
\sum_{n\leq x}\left(\sum_{\begin{subarray}{c}p\leq z\\ p\equiv a\, (m)\end{subarray}}f_{p}(n)\right)^{2}
=x\left(\frac{\log\log z}{\varphi(m)}\right) +B(2;m,a)x +O\left(\frac{x}{\log z}\right).
\end{align*}

\item[\rm (b)] If $z\leq mx^{1/3}$, then
\begin{align*}
\sum_{n\leq x}\left(\sum_{\begin{subarray}{c}p\leq z\\ p\equiv a\, (m)\end{subarray}}f_{p}(n)\right)^{3}
=x\left(\frac{\log\log z}{\varphi(m)}\right) +B(3;m,a)x +O\left(\frac{x}{\log z}\right).
\end{align*}

\item[\rm (c)] If $z\leq mx^{1/4}$, then
\begin{align*}
\sum_{n\leq x}\left(\sum_{\begin{subarray}{c}p\leq z\\ p\equiv a\, (m)\end{subarray}}f_{p}(n)\right)^{4}
&=3x\left(\frac{\log\log z}{\varphi(m)}\right)^{2}+\left(6B(2;m,a)+1\right)x\left(\frac{\log\log z}{\varphi(m)}\right)\\
&\quad +\left(3B(2;m,a)^{2}-3D_{m,a}(2,2)+B(4;m,a)\right)x\\
&\quad +O\left(\frac{x\log\log z}{\log z}\right).
\end{align*}

\item[\rm (d)] If $z\leq mx^{1/5}$, then
\begin{align*}
&\sum_{n\leq x}\left(\sum_{\begin{subarray}{c}p\leq z\\ p\equiv a\, (m)\end{subarray}}f_{p}(n)\right)^{5}\\
&=10x \left(\frac{\log\log z}{\varphi(m)}\right)^{2}\\
&\quad +\left(10(B(2;m,a)+B(3;m,a))+1\right)x\left(\frac{\log\log z}{\varphi(m)}\right)\\
&\quad +(10B(2;m,a)B(3;m,a)-10D_{m,a}(3,2)+B(5;m,a))x\\
&\quad +O\left(\frac{x\log\log z}{\log z}\right).
\end{align*}

\item[\rm (e)] If $z\leq mx^{1/6}$, then
\begin{align*}
&\sum_{n\leq x}\left(\sum_{\begin{subarray}{c}p\leq z\\ p\equiv a\, (m)\end{subarray}}f_{p}(n)\right)^{6}\\
&=15 x\left(\frac{\log\log z}{\varphi(m)}\right)^{3} +\left(45B(2;m,a)+25\right)x\left(\frac{\log\log z}{\varphi(m)}\right)^{2}\\
&\quad + \left(45\left(B(2;m,a)^{2}-D_{m,a}(2,2)\right)+15\left(B(4;m,a)+B(2;m,a)\right)\right.\\
&\quad \left. \phantom{B^{2}} +20B(3;m,a)+1 \right)x\left(\frac{\log\log z}{\varphi(m)}\right)\\
&\quad +\left(15B(2;m,a)^{3}-45\left(D_{m,a}(2,2)B(2;m,a)-D_{m,a}(2,2,2)\right)\right.\\
&\quad\quad -15D_{m,a}(2,2,2) +15\left(B(4;m,a)B(2;m,a)-D_{m,a}(4,2)\right)\\
&\quad\quad \left. +10\left(B(3;m,a)^{2}-D_{m,a}(3,3)\right)+B(6;m,a)\right)x\\
&\quad +O\left(\frac{x(\log\log z)^{2}}{\log z}\right).
\end{align*}
\end{enumerate}
Here, $O$-constants depend on $m$.
\end{lemma}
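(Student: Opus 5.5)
The plan is to start from (\ref{ao-blue}) with $j=l\in\{2,\ldots,6\}$. Under $z\leq mx^{1/l}$ the error term is $O(2^{l}\pi(z;m,a)^{l})=O_{m}(x/\log z)$: indeed $\pi(z;m,a)\ll z/(\varphi(m)\log z)$, so $\pi(z;m,a)^{l}\ll_{m} x/(\log z)^{l}$. This gives
\begin{align*}
\sum_{n\leq x}\Bigl(\sum_{\substack{p\leq z\\ p\equiv a\,(m)}}f_{p}(n)\Bigr)^{l}=x\sum_{s\leq l/2}I_{s}(l;m,a)+O_{m}\Bigl(\frac{x}{\log z}\Bigr).
\end{align*}
So the task reduces to writing out each $I_{s}(l;m,a)$ explicitly. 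I would list the compositions $(\alpha_{1},\ldots,\alpha_{s})$ of $l$ with all parts $\geq 2$, together with their multinomial weights $l!/(\alpha_{1}!\cdots\alpha_{s}!)$.

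The cases are as follows. For $l=2$ only $s=1$, $\alpha=(2)$ occurs. For $l=3$ only $(3)$ occurs. For $l=4$ we have $(4)$ and $(2,2)$, the latter with weight $6$. For $l=5$ we have $(5)$, and $(3,2)$ and $(2,3)$, each with weight $10$. For $l=6$ we have $(6)$; $(4,2)$ and $(2,4)$ with weight $15$ each; $(3,3)$ with weight $20$; and $(2,2,2)$ with weight $90$.

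Summing over $q_{1}<\cdots<q_{s}$ and passing to distinct unordered tuples as in (\ref{Harumi-Mikan}), each $I_{s}$ becomes a combination of sums
\begin{align*}
\sum_{\substack{q_{1},\ldots,q_{s}\leq z\\ \text{distinct}\\ q_{i}\equiv a\,(m)}}G(q_{1}^{\alpha_{1}})\cdots G(q_{s}^{\alpha_{s}}).
\end{align*}
These sums are evaluated by inclusion--exclusion over coincidences among the $q_{i}$. For $s=2$, the sum over distinct pairs equals the product of the single sums minus the diagonal, and the diagonal is $\sum_{p}G(p,\{\alpha_{1},\alpha_{2}\})=D_{m,a}(\alpha_{1},\alpha_{2})+O(1/z)$. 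For $s=3$, the distinct sum equals the full product, minus the three pairwise diagonals each multiplied by the remaining single sum, plus $2D_{m,a}(2,2,2)$.

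Each single sum is then replaced via (\ref{gpa-wa-34}): $\sum_{p}G(p^{\alpha})=\frac{1}{\varphi(m)}\log\log z+B(\alpha;m,a)+O(1/\log z)$. Expanding the products produces exactly the constants displayed in the lemma. For instance, when $l=4$ the class $(2,2)$ contributes
\begin{align*}
\tfrac{6}{2!}\cdot\Bigl(\bigl(\tfrac{\log\log z}{\varphi(m)}+B(2;m,a)\bigr)^{2}-D_{m,a}(2,2)\Bigr),
\end{align*}
and adding the $(4)$ term gives (c). Here the coefficient of $\frac{\log\log z}{\varphi(m)}$ is $6B(2;m,a)+1$, where the $1$ is the leading coefficient of the $(4)$ sum. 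The cross terms between an $O(1/\log z)$ error and a power of $\log\log z$ give the stated error $O(x(\log\log z)^{s-1}/\log z)$.

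The main obstacle is bookkeeping in the case $l=6$. There one must correctly combine the weight $90/3!=15$ for $(2,2,2)$ with the three-fold inclusion--exclusion, producing the terms $-45(D_{m,a}(2,2)B(2;m,a)-D_{m,a}(2,2,2))$ and $-15D_{m,a}(2,2,2)$. One must also check that the linear coefficients coming from all the $B$-constants and the leading $1$'s add up to $45B(2;m,a)+25$ for the square term. I would carry this out by treating each class separately and then collecting terms by powers of $\frac{\log\log z}{\varphi(m)}$.
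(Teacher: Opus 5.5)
Your proposal is correct and is essentially the paper's proof. Both start from (\ref{ao-blue})/(\ref{gisu-gisu-100}); your bound $2^{l}\pi(z;m,a)^{l}\ll_{m}x/(\log z)^{l}$ under $z\leq mx^{1/l}$ is exactly what justifies the $O(x/\log z)$ there. Both then group the square-full tuples by exponent pattern with the multinomial weights $1,6,10,15,20,90$, remove the distinctness condition by inclusion--exclusion, and insert (\ref{gpa-wa-34}). The paper's three-fold identity, which subtracts $3\sum_{q_1\neq q_2}G(q_1^2)^2G(q_2^2)$ and $\sum_q G(q^2)^3$, is equivalent to your ``$+2D_{m,a}(2,2,2)$'' version. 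The coefficients you would obtain in (c)--(e) agree with the statement.
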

\begin{proof}
From (\ref{gisu-gisu-100}) we have
\begin{align*}
\sum_{n\leq x}\left(\sum_{\begin{subarray}{c}p\leq z\\ p\equiv a\, (m)\end{subarray}}f_{p}(n)\right)^{2}
=x\sum_{\begin{subarray}{c}q\leq z\\ q\equiv a\, (m)\end{subarray}}G\left(q^{2}\right)+O\left(\frac{x}{\log z}\right)\quad (z\leq mx^{1/2}),
\end{align*}
and
\begin{align*}
\sum_{n\leq x}\left(\sum_{\begin{subarray}{c}p\leq z\\ p\equiv a\, (m)\end{subarray}}f_{p}(n)\right)^{3}
=x\sum_{\begin{subarray}{c}q\leq z\\ q\equiv a\, (m)\end{subarray}}G\left(q^{3}\right)+O\left(\frac{x}{\log z}\right)\quad (z\leq mx^{1/3}).
\end{align*}
Here, we shall apply (\ref{gpa-wa-34}) to them. Then, we get assertions (a) and (b), at once.

In the case $z\leq mx^{1/4}$, from (\ref{gisu-gisu-100}) we observe that
\begin{align*}
&\sum_{n\leq x}\left(\sum_{\begin{subarray}{c}p\leq z\\ p\equiv a\, (m)\end{subarray}}f_{p}(n)\right)^{4}\\
&=x\left(6\sum_{\begin{subarray}{c}q_{1}<q_{2}\leq z \\ q_{i}\equiv a\, (m)\end{subarray}} G\left(q_{1}^{2}\right)
                                                                                         G\left(q_{2}^{2}\right)
         +\sum_{\begin{subarray}{c}q\leq z\\ q\equiv a\, (m)\end{subarray}}G\left(q^{4}\right) \right) +O\left(\frac{x}{\log z}\right).
\end{align*}
Here, note that
\begin{align*}
&\sum_{\begin{subarray}{c}q_{1}<q_{2}\leq z \\ q_{i}\equiv a\, (m)\end{subarray}} G\left(q_{1}^{2}\right)
                                                                                         G\left(q_{2}^{2}\right)
=
\frac{1}{2}\sum_{\begin{subarray}{c}q_{1}, q_{2}\leq z\\ \textit{distinct}\\ q_{i}\equiv a\, (m)\end{subarray}}G\left(q_{1}^{2}\right)G\left(q_{2}^{2}\right)\\
&=\frac{1}{2}\left( \left( \sum_{\begin{subarray}{c}q_{1}\leq z\\ q_{1}\equiv a\, (m)\end{subarray}}G\left(q_{1}^{2}\right)\right)
                    \left(\sum_{\begin{subarray}{c}q_{2}\leq z\\ q_{2}\equiv a\, (m)\end{subarray}}G\left(q_{2}^{2}\right)\right)
              -\sum_{\begin{subarray}{c}q\leq z\\ q\equiv a\, (m)\end{subarray}}G\left (q^{2}\right) G\left(q^{2}\right) \right).
\end{align*}
By (\ref{gpa-wa-34}) we have
\begin{align*}
\sum_{\begin{subarray}{c}q_{1}<q_{2}\leq z \\ q_{i}\equiv a\, (m)\end{subarray}} G\left(q_{1}^{2}\right)
                                                                                         G\left(q_{2}^{2}\right)
&=\frac{1}{2}\left(\frac{\log\log z}{\varphi(m)}\right)^{2} +B(2;m,a)\left(\frac{\log\log z}{\varphi(m)}\right)\\
&\quad +\frac{1}{2}\left(B(2;m,a)^{2}-D_{m,a}(2,2)\right) +O\left(\frac{\log\log z}{\log z}\right).
\end{align*}
Also, we see that
\begin{align*}
\sum_{\begin{subarray}{c}q\leq z\\ q\equiv a\, (m)\end{subarray}}G\left(q^{4}\right)=\frac{1}{\varphi(m)}\log\log z +B(4;m,a)+O\left(\frac{1}{\log z}\right).
\end{align*}
Combining these results we obtain the assertion (c).

As for the assertion (d), first we obtain that for $z\leq mx^{1/5}$
\begin{align*}
&\sum_{n\leq x}\left(\sum_{\begin{subarray}{c}p\leq z\\ p\equiv a\, (m)\end{subarray}}f_{p}(n)\right)^{5}\\
&=x\left(\frac{5!}{3!2!}\sum_{\begin{subarray}{c}q_{1}, q_{2}\leq z\\ \textit{distinct}\\ q_{i}\equiv a\, (m)\end{subarray}}G\left(q_{1}^{3}\right)G\left(q_{2}^{2}\right)
+\sum_{\begin{subarray}{c}q\leq z\\ q\equiv a\, (m) \end{subarray}}G\left(q^{5}\right)\right) +O\left(\frac{x}{\log z}\right).
\end{align*}
As for (e) ($z\leq mx^{1/6}$), first we observe that
\begin{align*}
&\sum_{n\leq x}\left(\sum_{\begin{subarray}{c}p\leq z\\ p\equiv a\, (m)\end{subarray}}f_{p}(n)\right)^{6}\\
&=90 x \frac{1}{3!}\sum_{\begin{subarray}{c}q_{1}, q_{2}, q_{3}\leq z\\ \textit{distinct}\\ q_{i}\equiv a\, (m)\end{subarray}}
       G\left(q_{1}^{2}\right)G\left(q_{2}^{2}\right) G(q_{3}^{2})
+15x \sum_{\begin{subarray}{c}q_{1},q_{2}\leq z\\ \textit{distinct}\\ q_{i}\equiv a\, (m)\end{subarray}}G\left(q_{1}^{4}\right)G\left(q_{2}^{2}\right)\\
&\quad +20 x \frac{1}{2}\sum_{\begin{subarray}{c}q_{1}, q_{2}\leq z\\ \textit{distinct}\\ q_{i}\equiv a\, (m)\end{subarray}}G\left(q_{1}^{3}\right)G\left(q_{2}^{3}\right)
+ x\sum_{\begin{subarray}{c}q\leq z\\ q\equiv a\, (m)\end{subarray}}G\left(q^{6}\right) +O\left(\frac{x}{\log z}\right).
\end{align*}
Moreover, we note that
\begin{align*}
&\sum_{\begin{subarray}{c}q_{1}, q_{2}, q_{3}\leq z\\ \textit{distinct}\\ q_{i}\equiv a\, (m)\end{subarray}}
       G\left(q_{1}^{2}\right)G\left(q_{2}^{2}\right) G(q_{3}^{2})\\
&=\left(\sum_{\begin{subarray}{c}q_{1}\leq z\\ q_{1}\equiv a\, (m)\end{subarray}}G\left(q_{1}^{2}\right)\right)
   \left(\sum_{\begin{subarray}{c}q_{2}\leq z\\ q_{2}\equiv a\, (m)\end{subarray}}G\left(q_{2}^{2}\right)\right)
   \left(\sum_{\begin{subarray}{c}q_{3}\leq z\\ q_{3}\equiv a\, (m)\end{subarray}}G\left(q_{3}^{2}\right)\right)\\
&\quad -3\sum_{\begin{subarray}{c}q_{1},q_{2}\leq z\\ \textit{distinct}\\ q_{i}\equiv a\, (m)\end{subarray}}
   \left(G\left(q_{1}^{2}\right)\right)^{2}G\left(q_{2}\right)^{2}
 -\sum_{\begin{subarray}{c}q\leq z\\ q\equiv a\, (m)\end{subarray}}\left(G\left(q^{2}\right)\right)^{3}.
\end{align*}
Finally, we shall use (\ref{gpa-wa-34}) to obtain the assertions (d) and (e).
\end{proof}

By Lemma \ref{jyokigen-yamada} we now finish this section.
\begin{proof}[Proof of Theorem \ref{okura-1}]
In (\ref{kocha-cookies}), we apply Lemma \ref{jyokigen-yamada}, then we reach the assertions (a)--(e) of Theorem \ref{okura-1}.
\end{proof}

\section{Proof of Theorem \ref{umeko}}
Finally, we shall conclude this notes by proving Theorem \ref{umeko} on $\omega(n;m,a)$ which is defined in (\ref{teigi-omega-nma}).

Let $m\geq 1$ be a fixed integer and $a$ an integer such that $(a,m)=1$. 
Moreover, let $k\geq 2$ be any integer satisfying $k\leq \left(\frac{1}{\varphi(m)}\log\log z\right)^{1/3}$,
where $x\geq 1$ and $z\geq 1$ are sufficiently large numbers. 

To derive Theorem \ref{umeko} we now set $z=mx^{1/k}$.
At first, we note a relation $\omega(n;m,a)$ and $\sum_{\begin{subarray}{c}p\leq z\\ p\equiv a\, (m)\end{subarray}}f_{p}(n)$.
For any natural numbers $n\leq x$ we have
\begin{align*}
\omega(n;m,a)&=\sum_{\begin{subarray}{c}p|n\\ p\equiv a\, (m)\\ p\leq z\end{subarray}}1+O(k)
=\sum_{\begin{subarray}{c}p|n\\ p\equiv a\, (m)\\ p\leq z\end{subarray}}\left(1-\frac{1}{p}+\frac{1}{p}\right)+O(k)\nonumber \\
&=\sum_{\begin{subarray}{c}p\leq z\\ p\equiv a\,(m)\end{subarray}}f_{p}(n)+\frac{1}{\varphi(m)}\log\log z +O_{m}(1)+O(k)\nonumber \\
&=\sum_{\begin{subarray}{c}p\leq z\\ p\equiv a\,(m)\end{subarray}}f_{p}(n)+\frac{1}{\varphi(m)}\log\log x +O_{m}(k),
\end{align*}
and, then we see that
\begin{align}
&\sum_{n\leq x}\left(\omega(n;m,a)-\frac{1}{\varphi(m)}\log\log x\right)^{k}\nonumber \\
&= \sum_{n\leq x}\left(\sum_{\begin{subarray}{c}p\leq z\\ p\equiv a\,(m)\end{subarray}}f_{p}(n)\right)^{k}
+O\left(\sum_{l=0}^{k-1}\binom{k}{l}(M_{m}k)^{k-l}\left|\sum_{n\leq x}\left(\sum_{\begin{subarray}{c}p\leq z\\ p\equiv a\,(m)\end{subarray}}f_{p}(n)\right)^{l}\right|\right),
\label{omega-moment-1}
\end{align}
where $z=mx^{1/k}$ and $M_{m}$ denotes a some positive constant. Next, using Lemma \ref{lemma-sakai} and Corollary \ref{kei-lemma-sakai}
we shall evaluate the right-hand side of (\ref{omega-moment-1}).\\

\noindent{\bf (I)} Let $k\geq 2$ be even. Recalling $z=mx^{1/k}$ we shall apply Lemma \ref{lemma-sakai} (a) to the first term in the right-hand side of (\ref{omega-moment-1}).
We obtain that
\begin{align}
&\sum_{n\leq x}\left(\sum_{\begin{subarray}{c}p\leq z\\ p\equiv a\, (m)\end{subarray}}f_{p}(n)\right)^{k}\nonumber \\
&=C_{k}x\left(\frac{1}{\varphi(m)}\log\log \left(mx^{1/k}\right)\right)^{\frac{k}{2}}\left(1+O_{m}\left(\frac{k^{3}}{\frac{1}{\varphi(m)}\log\log (mx^{1/k})}\right)\right)\nonumber \\
&\quad +O(C_{k}x)\nonumber \\
&=C_{k}x\left(\frac{1}{\varphi(m)}\log\log x\right)^{\frac{k}{2}}\left(1+O_{m}\left(\frac{k^{3}}{\frac{1}{\varphi(m)}\log\log x}\right)\right). \label{pengin-1}
\end{align}

As for the sum in the $O$-term in (\ref{omega-moment-1}), we split $l=k-1$ and $l\leq k-2$. For $l=k-1$ (which is odd), by the upper bound in Lemma \ref{lemma-sakai} (b)
we observe that
\begin{align}
&\binom{k}{k-1}(M_{m}k)\tilde{C}_{k}x \left(\frac{1}{\varphi(m)}\log\log z\right)^{\frac{k-2}{2}}\nonumber \\
&\ll M_{m} k^{3} C_{k} x \left(\frac{1}{\varphi(m)}\log\log z\right)^{\frac{k-2}{2}}
\ll_{m} C_{k}x \left(\frac{1}{\varphi(m)}\log\log x\right)^{\frac{k-2}{2}}k^{3}. \label{pengin-2}
\end{align}
For $0\leq l \leq k-2$, we shall use Corollary \ref{kei-lemma-sakai}, then
\begin{align*}
&\sum_{l=0}^{k-2}\binom{k}{l}(M_{m}k)^{k-l}\left|\sum_{n\leq x}\left(\sum_{\begin{subarray}{c}p\leq z\\ p\equiv a\, (m)\end{subarray}}f_{p}(n)\right)^{l}\right|\\
&\ll \sum_{l=0}^{k-2}\binom{k}{l}(M_{m}k)^{k-2-l+2}C_{k}\frac{C_{l}}{C_{k}}x\left(\frac{1}{\varphi(m)}\log\log z\right)^{\frac{l}{2}-\frac{k-2}{2}+\frac{k-2}{2}}\\
&=C_{k}x\left(\frac{1}{\varphi(m)}\log\log z\right)^{\frac{k-2}{2}}(M_{m}k)^{2}\sum_{l=0}^{k-2}\binom{k}{l}\frac{C_{l}}{C_{k}}
  \left(\frac{M_{m}k}{\left(\frac{1}{\varphi(m)}\log\log z\right)^{1/2}}\right)^{k-2-l}.
\end{align*}
Note that
\begin{align*}
\frac{C_{l}}{C_{k}} \asymp \left(\frac{l}{e}\right)^{\frac{l}{2}}\left(\frac{k}{e}\right)^{-\frac{k}{2}}\ll \left(\left(\frac{e}{k}\right)^{1/2}\right)^{k-2-l}\frac{e}{k}. 
\end{align*}
We observe that
\begin{align}
&\ll C_{k}x\left(\frac{1}{\varphi(m)}\log\log z\right)^{\frac{k-2}{2}}M_{m}^{2}k
    \sum_{l=0}^{k-2}\binom{k}{l}\left(\frac{M_{m}e^{1/2}k^{1/2}}{\left(\frac{1}{\varphi(m)}\log\log z\right)^{1/2}}\right)^{k-2-l}\nonumber \\
&= C_{k}x\left(\frac{1}{\varphi(m)}\log\log z\right)^{\frac{k-2}{2}}M_{m}^{2}k \sum_{s=0}^{k-2}\binom{k}{k-2-s}\left(\frac{M_{m}e^{1/2}k^{1/2}}{\left(\frac{1}{\varphi(m)}\log\log z\right)^{1/2}}\right)^{s}\nonumber \\
&\ll C_{k}x\left(\frac{1}{\varphi(m)}\log\log z\right)^{\frac{k-2}{2}}M_{m}^{2}k^{3} \sum_{s=0}^{k-2}\frac{1}{(s+2)!}\left(\frac{M_{m}e^{1/2}k^{1/2}}{\left(\frac{1}{\varphi(m)}\log\log z\right)^{1/2}}\right)^{s}\nonumber \\
&\ll_{m} C_{k}x\left(\frac{1}{\varphi(m)}\log\log x\right)^{\frac{k-2}{2}}k^{3}. \label{pengin-3}
\end{align}
Collecting (\ref{omega-moment-1})--(\ref{pengin-3}) we reach the assertion of (a) of Theorem \ref{umeko}.

\bigskip

\noindent{\bf (II)} Let $k\geq 3$ be odd. First, we shall use Corollary \ref{kei-lemma-sakai} for the $O$-term in (\ref{omega-moment-1}).
We observe that 
\begin{align}
& \sum_{l=0}^{k-1}\binom{k}{l}(M_{m}k)^{k-l}\left|\sum_{n\leq x}\left(\sum_{\begin{subarray}{c}p\leq z\\ p\equiv a\,(m)\end{subarray}}f_{p}(n)\right)^{l}\right| \nonumber \\
& \ll \sum_{l=0}^{k-1}\binom{k}{l}(M_{m}k)^{k-1-l+1}C_{k}\frac{C_{l}}{C_{k}}x\left(\frac{1}{\varphi(m)}\log\log z\right)^{\frac{l}{2}-\frac{k-1}{2}+\frac{k-1}{2}}\nonumber \\
& \ll C_{k}x\left(\frac{1}{\varphi(m)}\log\log z\right)^{\frac{k-1}{2}}M_{m}k^{3/2}\sum_{s=0}^{k-1}\frac{1}{(s+1)!}\left(\frac{M_{m}e^{1/2}k^{3/2}}{\left(\frac{1}{\varphi(m)}\log\log z\right)^{1/2}}\right)^{s} \nonumber\\
& \ll_{m} (k^{3/2}C_{k}) x \left(\frac{1}{\varphi(m)}\log\log x\right)^{\frac{k-1}{2}}. \label{pen-chan-1}
\end{align}
From this aspect we shall apply the upper bound of (b) in Lemma \ref{lemma-sakai} to the first term in the right-hand side of (\ref{omega-moment-1}).
Then, we get
\begin{align}
\sum_{n\leq x}\left(\sum_{\begin{subarray}{c}p\leq z\\ p\equiv a\, (m)\end{subarray}}f_{p}(n)\right)^{k}
&\ll_{m} \tilde{C_{k}}x \left(\frac{1}{\varphi(m)}\log\log z\right)^{\frac{k-1}{2}} \nonumber \\
&\ll_{m} k^{3/2}{C_{k}}x \left(\frac{1}{\varphi(m)}\log\log x\right)^{\frac{k-1}{2}}. \label{pen-chan-2}
\end{align}
By (\ref{omega-moment-1}), (\ref{pen-chan-1}), and (\ref{pen-chan-2}) we complete the proof of the assertion (b) of Theorem \ref{umeko}.


%

\newpage

\noindent Tokuhon Makoto Minamide\\
Graduate School of Sciences and Technology for Innovation\\
Yamaguchi University\\
Yoshida 1677-1, Yamaguchi 753-8512, Japan\\
E-mail: minamide@yamaguchi-u.ac.jp\\

\noindent Haruka Sakai\\
Graduate School of Sciences and Technology for Innovation\\
Yamaguchi University\\
Yoshida 1677-1, Yamaguchi 753-8512, Japan\\
E-mail: e003vbv@y-u.jp\\

\noindent Yoshio Tanigawa\\
Nishizato 2-13-1, Meito, Nagoya 465-0084, Japan\\
E-mail: tanigawa@math.nagoya-u.ac.jp
\end{document}